\numberwithin{equation}{section}
\theoremstyle{plain}
\newtheorem{theorem}{\sc Theorem}[section]
\newtheorem{corollary}[theorem]{\sc Corollary}
\newtheorem{definition}[theorem]{\sc Definition}
\newtheorem{lemma}[theorem]{\sc Lemma}
\theoremstyle{remark}
\newtheorem{remark}[theorem]{\sc Remark}
\newcommand{\be}{\begin{equation}}
\newcommand{\ee}{\end{equation}}
\newcommand{\N}{\mathbb{N}}
\newcommand{\n}{^{(n)}}
\newcommand{\PP}{\mathbb{P}}
\newcommand{\prt}[1]{\frac{d }{d t}  {#1}(t)}
\begin{document}

\title[Fluid Limit for the Poisson Encounter-Mating Model]{Fluid Limit for the Poisson Encounter-Mating Model}

\author{Onur G\"un}
\address{Onur G\"un\\ Weierstrass Institute\\ Mohrenstrasse 39\\ 10117 Berlin\\ Germany.}
\email{Onur.Guen@wias-berlin.de}

\author{Atilla Yilmaz}
\address{Atilla Yilmaz, Department of Mathematics, Ko\c{c} University, Sar\i yer, Istanbul 34450, Turkey.}
\email{atillayilmaz@ku.edu.tr}

\date{Revised on November 18, 2016.}

\subjclass[2010]{92D25, 60J28, 60F15.} 
\keywords{Density dependent population process, pair formation, assortative mating, panmixia, mating preferences, mating pattern, fluid approximation, Lotka-Volterra equations, replicator equations.}

\begin{abstract}

Stochastic encounter-mating (SEM) models describe monogamous permanent pair formation in finite zoological populations of multitype females and males. In this article, we study SEM with Poisson firing times. First, we prove that the model enjoys a fluid limit as the population size diverges, i.e., the stochastic dynamics converges to a deterministic system governed by coupled ODEs. Then, we convert these ODEs to the well-known Lotka-Volterra and replicator equations from population dynamics. Next, under the so-called fine balance condition which characterizes panmixia, we solve the corresponding replicator equations and give an exact expression for the fluid limit. Finally, we consider the case with two types of females and males. Without the fine balance assumption, but under certain symmetry conditions, we give an explicit formula for the limiting mating pattern, and then use it to characterize assortative mating. 

\end{abstract}

\maketitle

\section{Introduction}\label{sec1}

\subsection{The model}
Consider a zoological population consisting of $n$ females and $n$ males, divided into $k$ types which are labeled $1,\dots,k$. We denote by $x_i\n\geq 0$ the number of type-$i$ females and by $y_j\n\geq 0$ the number of type-$j$ males, for $i,j\in[k]:=\{1,\dots,k\}$. 
To each type-$i$ female (resp.\ type-$j$ male) a Poisson process with rate $\alpha_i$ (resp.\ $\beta_j$) is attached. These Poisson processes are mutually independent and they give the so-called firing times of the animals.
The mating preferences of the animals depend on their types and form a $k\times k$ matrix $P=(p_{ij})_{i,j\in[k]}$, with $0<p_{ij}\leq 1$. Under these assumptions, the dynamics of the population is as follows. Initially all individuals are single. 
At any time, when the Poisson clock of one of the single individuals rings (by the Poisson assumption no two individuals' clocks ring at the same time), it chooses a single individual from the opposite sex, uniformly at random, to form a temporary pair. Next, if this temporary pair is comprised of a type-$i$ female and a type-$j$ male, it becomes a permanent pair with probability $p_{ij}$ and the individuals in that pair leave the singles pool; otherwise the temporary pair is broken and the individuals go back to the singles pool.  We refer to this two-stage permanent pair formation model as Poisson encounter-mating (Poisson EM).  Observe that the number of types present in the female and male populations need not be the same. Indeed, setting for example $x_i\n=0$ would take type-$i$ females out of the picture. 

We designate by $Q_{ij}\n(t),\;t\geq 0,$ the number of (permanent) type-$ij$ pairs at time $t$. Here, the first index always refers to the type of the female and the second to the type of the male. We call the $k\times k$ matrix-valued process $Q\n(t)=(Q\n_{ij}(t))_{i,j\in[k]}$ the pair-type process. Since the Poisson processes are memoryless, $Q\n$ is a pure jump continuous-time Markov process. In order to formally define $Q\n$, we briefly introduce some notation. Let $\mathcal{M}^{k\times k}(A)$ denote the set of $k\times k$ matrices whose entries are in $A\subseteq \mathbb{R}$. For $M=(M_{ij})_{i,j\in[k]}\in\mathcal{M}^{k\times k}(A)$ we define the $i$-th row sum, the $j$-th column sum and the grand total of $M$, respectively, as
\begin{equation*}
M_{i,\cdot}=\sum_{j'=1}^k M_{ij'},\quad M_{\cdot,j}=\sum_{i'=1}^k M_{i'j},\quad M_{tot}=\sum_{i'=1}^k\sum_{j'=1}^k M_{i'j'}. 
\end{equation*}
We denote by $I^{ij}$ the $k\times k$ matrix whose entries are zero except the $ij$-th entry, which is one. Throughout this article we use the max norm on $\mathcal{M}^{k\times k}(A)$ given by $|M|=\max_{i,j\in[k]}|M_{ij}|$. Since all matrix norms are equivalent, our results are valid for any choice of norm.

The pair-type process $Q\n$ is a continuous-time Markov process taking values in $\mathcal{M}^{k\times k}(\N\cup\{0\})$ that has jumps of size 1, more precisely, the transitions are from $M$ to $M+I^{ij}$ for $i,j\in[k]$. The transition rates are given by
\begin{equation}\label{trates}
\rho^{(n)}(M,M+I^{ij})=\frac{\pi_{ij}\big(x_i\n-M_{i,\cdot}\big)\big(y_j\n-M_{\cdot,j}\big)}{n-M_{tot}}
\end{equation}
where
\begin{equation*}
\Pi=(\pi_{ij})_{i,j\in[k]},\quad \pi_{ij}=p_{ij}(\alpha_i+\beta_j),
\end{equation*}
with the convention that $\rho^{(n)}(M,\cdot)\equiv 0$ for $M$ with $M_{tot}=n$. 

Let us explain the formula in (\ref{trates}). When the pair-type formation at a time is $M$, the number of type-$i$ females (resp. type-$j$ males) in the singles pool is $x_i\n-M_{i,\cdot}$ (resp. $y_j\n-M_{\cdot,j}$). Also, by the description of the model, the total number of single females is always equal to that of single males and given by $n-M_{tot}$. A new type-$ij$ pair is formed in two ways: either
the clock of a type-$i$ single female rings, this female encounters a type-$j$ single male to form a temporary pair, and finally, this pair becomes permanent; or similar has to happen with a type-$j$ single male's clock ringing. In the first scenario, the total rate with which the clock of a type-$i$ single female rings is $\alpha_i(x_i\n-M_{i,\cdot})$, the probability that it samples a type-$j$ male from single males is $(y_j\n-M_{\cdot,j})/(n-M_{tot})$, the probability that the temporary pair formed becomes permanent is $p_{ij}$, and the product of these terms gives the rate of this event. The corresponding terms in the second scenario are $\beta_j(y_j\n-M_{\cdot,j})$, $(x_i\n-M_{i,\cdot})/(n-M_{tot})$ and $p_{ij}$. Finally, the sum of the rates of these two events gives (\ref{trates}).

Since $Q\n$ is a pure jump Markov process for every $n$, it is possible to define the whole family $\{Q\n : n\in\N\}$ via a collection of independent standard Poisson processes whose joint distribution we denote by $\PP$ (see Section \ref{subsecFL}). We are interested in the infinite population asymptotics of the model, therefore we assume that there are non-negative numbers $x_1,\dots,x_k$ and $y_1,\dots,y_k$ such that for all $i,j\in[k]$, as $n\to\infty$
\be\label{convxy}
n^{-1}x_i\n\longrightarrow x_i,\quad n^{-1}y_j\n\longrightarrow y_j.
\ee
Note that 
$
x_1+\cdots+x_k=y_1+\cdots+y_k=1.
$ We refer to such a collection of numbers $x_1,\dots,x_k,y_1,\dots,y_k$ as an infinite population from the species.

The pair-type process $Q\n$ naturally stops at
\begin{equation*}
T_n:=\inf \{t\geq 0:\;Q_{tot}\n(t)=n\},
\end{equation*}
that is, when the singles pool is depleted and every individual is in a permanent pair.
$Q\n(T_n)$ is called the mating pattern of the population and is of central importance in this paper. Note that $Q\n(T_n)$ is a random $k\times k$ matrix (or contingency table) whose $i$-th row sum is $x_i\n$ and $j$-th column sum is $y_j\n$ for all $i,j\in[k]$.
We always assume that $p_{ij}>0$ and $\alpha_i+\beta_j>0$ for all $i,j\in[k]$. Hence, almost surely $T_n<\infty$.

\subsection{Panmixia, homogamy and heterogamy}

One fundamental question about the mating pattern is whether correlations exist between female and male types. Zero correlations correspond to the case where the relative frequency of type-$ij$ pairs is given by the product of the relative frequencies of type-$i$ females and type-$j$ males, which has been called ``panmixia" in the literature. Since we investigate Poisson EM as the population size diverges and establish a strong limit theorem for the mating pattern $Q\n(T_n)$, we naturally use the following definition of panmixia.

\begin{samepage}
\begin{definition}
An infinite population $x_1,\dots,x_k,y_1,\dots,y_k$ is said to be panmictic if $\PP$-a.s.
\begin{equation*}
\lim_{n\to\infty}n^{-1}Q_{ij}\n(T_n) = x_i y_j,\quad\quad\quad   \forall i,j\in[k].
\end{equation*}
The species is said to be panmictic if every infinite population from the species is panmictic.
\end{definition}
\end{samepage}

Complementing the concept of panmixia is assortative mating. Homogamy (resp.\ heterogamy) describes the situations where there are positive (resp.\ negative) correlations in the mating pattern between females and males with similar types. In order to make the definition of assortative mating precise, one needs a (genotypical or phenotypical) distance on the set of types. Such a structure for types must be reflected on preferences and this requires a more complex model.  However, when $k=2$, we can conveniently define assortative mating since there is a unique metric on $\{1,2\}$.  Moreover, in this case, there is homogamy (resp.\ heterogamy) for type-1  if and only if there is homogamy (resp.\ heterogamy) for type-2. These observations lead to the following definition.
\begin{definition}
For $k=2$, an infinite population $x_1,x_2,y_1,y_2$ with $x_1x_2y_1y_2\not=0$ is said to be homogamous if $\PP$-a.s.
\begin{equation*}
\lim_{n\to\infty} n^{-1}Q_{12}\n(T_n) < x_1 y_2,
\end{equation*}
and heterogamous if $\PP$-a.s.
\begin{equation*}
\lim_{n\to\infty} n^{-1}Q_{12}\n(T_n) > x_1 y_2.
\end{equation*}
The species is said to be homogamous (resp.\ heterogamous) if every such infinite population from the species is homogamous (resp.\ heterogamous).
\end{definition} 

Note that definitions of both panmixia and homogamy/heterogamy assume the existence of the infinite population limit of the normalized mating pattern and that this limit is the same for all sequences of finite populations satisfying (\ref{convxy}), which are shown in Section \ref{sec2}.  For the corresponding definitions in the context of finite populations, one has to replace limits with expectations (see \eqref{ortakavm} for $k=2$). Also, observe that in the definition of homogamy/heterogamy we exclude the cases where one type is absent, since otherwise the system is trivial and there is panmixia for all choices of parameters. 

\subsection{Previous results}\label{subsecprev}

In \cite{Gim88a}, Gimelfarb introduced two discrete-time models for permanent monogamous pair formation: individual and mass encounter-mating. In the first model, at each time step, one single female and one single male are selected, both uniformly at random, to form a temporary pair and this pair becomes permanent exactly as in the Poisson EM model with probability $p_{ij}$. Observe that if we set, say,  $\alpha_i=0$ and $\beta_j=1$ for all $i,j\in[k]$, then the dynamics of Gimelfarb's individual encounter-mating model is the same as the embedded discrete-time chain of the pair-type process $Q\n$ of Poisson EM, and in particular, the mating patterns of the two models coincide. The mass encounter-mating model has a very different encounter mechanism where, at each time step, all the single females and males form temporary pairs according to a permutation chosen uniformly at random, while the mechanism of permanent pair formation from temporary pairs is as before. The main conceptual conclusion of Gimelfarb was that the mating pattern depends not only on the preferences but also on the encounter mechanism. Moreover, given the encounter mechanism, different mating preferences can lead to the same mating pattern. He then stated conditions on the parameters of the models that he conjectured to be sufficient for panmixia, supported the one for mass encounter with a non-rigorous argument, and provided only numerical evidence in the individual encounter case.

In \cite{GunYil14a}, we introduced the stochastic encounter-mating (SEM) model to unify and generalize Gimelfarb's models. The key feature of this generalization is the introduction of firing times which allows one to define a wide range of models and take advantage of their invariance under certain changes of parameters. We investigated in detail the special case where $p_{ij}=1$ for all $i,j\in[k]$, that is, definite mating upon encounter, and proved among other things that there is panmixia for all firing time distributions and that the firing times and the mating pattern are independent.  As we have already seen, the pair-type process of Poisson EM is a continuous-time Markov process whose rates depend on the parameters of the model through $\pi_{ij}=p_{ij}(\alpha_i+\beta_j)$. Hence, one can play with the parameters without changing the model as long as $\pi_{ij}$'s stay the same. Using this and our analysis of the case with definite mating upon encounter, we concluded that the model exhibits panmixia if there are non-negative numbers $\bar \alpha_i$ and $\bar \beta_j$ such that $\pi_{ij}=p_{ij}(\alpha_i+\beta_j)=1(\bar \alpha_i+\bar \beta_j)$ for every $i,j\in[k]$. We record this condition for future reference.
\begin{definition}\label{defnfineb} We say that Poisson EM satisfies the fine balance condition if there exist non-negative numbers $\bar{\alpha}_1,\dots,\bar{\alpha}_k$ and $\bar \beta_1,\dots,\bar{\beta}_k$ such that
\begin{equation}\label{fb}
\pi_{ij}=\bar \alpha_i+\bar \beta_j,\quad\quad\quad \forall i,j\in[k].
\end{equation}
Equivalently,
\begin{equation*}
\pi_{ij}+\pi_{i'j'}=\pi_{ij'}+\pi_{i'j},\quad \forall i,i',j,j'\in[k].
\end{equation*}
\end{definition}
The fine balance condition is precisely what Gimelfarb had conjectured in \cite{Gim88a} to be sufficient for panmixia in the context of individual encounter-mating. In \cite{GunYil14a}, we not only settled this conjecture, but also used a recursive argument to prove that the fine balance condition is necessary for the species to be panmictic. Moreover, under the fine balance condition we gave the distributions of the pair-type process $Q\n(t)$ and the mating pattern $Q\n(T_n)$. Finally, we answered the assortative mating question when $k=2$: for any $x_1\n x_2\n y_1\n y_2\n\ne0$,
\begin{align}
\label{ortakavm}
\begin{split}
\text{(homogamy for finite pop.)}\quad \mathbb{E}[Q_{12}\n(T_n)] &< n^{-1}x_1\n y_2\n\quad\text{if}\quad\pi_{11} + \pi_{22} > \pi_{12} + \pi_{21},\\
\text{(panmixia for finite pop.)}\quad \mathbb{E}[Q_{12}\n(T_n)] &= n^{-1}x_1\n y_2\n\quad\text{if}\quad\pi_{11} + \pi_{22} = \pi_{12} + \pi_{21},\\
\text{(heterogamy for finite pop.)}\quad \mathbb{E}[Q_{12}\n(T_n)] &> n^{-1}x_1\n y_2\n\quad\text{if}\quad\pi_{11} + \pi_{22} < \pi_{12} + \pi_{21}.
\end{split}
\end{align}
Here, $\mathbb{E}$ denotes expectation with respect to $\mathbb{P}$.


%
%

\subsection{Overview of results}\label{subsecoverview}

In this article, we analyze the dynamics of the Poisson EM model as the population size $n$ diverges. In Section \ref{sec2}, we start our investigation by observing that the pair-type process $Q\n$ is approximately a density dependent population process. Then, we show that $Q\n$ rescaled by $n$ converges $\PP$-a.s.\ in the sup norm up to any finite time, where the limiting (deterministic) process $Q(t)$ solves a system of coupled ODEs. More precisely, in Theorem \ref{thmLLN} we prove that, $\PP$-a.s.
\begin{equation*}
\lim_{n\to\infty} \sup_{0\leq t\leq T}\left|n^{-1}Q\n(t) - Q(t)\right|=0
\end{equation*}
for every $T\in[0,\infty)$, where $Q(t)=(Q_{ij}(t))_{i,j\in[k]}$ satisfies
\be\label{eqforq}
\prt{Q_{ij}} = \frac{\pi_{ij}\big(x_i-Q_{i,\cdot}(t)\big)\big(y_j-Q_{\cdot,j}(t)\big)}{1-Q_{tot}(t)},\quad \text{with}\quad Q_{ij}(0)=0.
\ee
This type of generalization of the law of large numbers (LLN), regarding the convergence of the rescaled paths of a pure jump Markov process to a solution of a system of ODEs, is known as the fluid limit and is due to \cite{K70}. Here, $Q$ represents the infinite population pair-type process and we use the terms pairs, singles, etc.\ for $Q$ as well. As a consequence of the fluid limit, we prove in Theorem \ref{cormat} that $\PP$-a.s.\ the mating pattern of the infinite population satisfies
$$\lim_{n\to\infty}n^{-1}Q\n(T_n) = Q(\infty) := \lim_{t\to\infty}Q(t).$$

After establishing these limit theorems, we focus on the evolution of $Q$. In Section \ref{sec3}, we relate the system of ODEs that describe $Q$ to the well-known Lotka-Volterra and replicator equations from population dynamics. Let $X_i(t)$, $Y_j(t)$ and $Z(t)$ denote the density of type-$i$ single females, type-$j$ single males and all single females (or males):
\be\label{defnxyz}
X_i(t):=x_i-Q_{i,\cdot}(t),\quad Y_j(t):=y_j-Q_{\cdot,j}(t),\quad Z(t):=1-Q_{tot}(t).
\ee
Then, for all $i,j\in[k]$,
\be\label{eqnxy}
\prt{X_i}=-\frac{X_i(t)}{Z(t)}\sum_{j=1}^k \pi_{ij} Y_j(t),\quad \prt{Y_j}=-\frac{Y_j(t)}{Z(t)}\sum_{i=1}^k \pi_{ij} X_i(t),
\ee
with $X_i(0)=x_i$ and $Y_j(0)=y_j$. Hence, up to a time change due to the $Z(t)$ term, this is a system of $2k$ Lotka-Volterra equations where the intrinsic growth (or decay) rate is 0 for all types and sexes. See Theorem \ref{ltrep} for the precise statement.\,Another important equation in population dynamics is the replicator equation, first introduced in \cite{TJ78}. Replicator equations describe the evolution of different types in a population under density dependent fitness functions and are often used in the context of evolutionary game theory. In general, a Lotka-Volterra equation with $l$ variables is equivalent to a replicator equation with $l+1$ variables, see \cite[Theorem 7.5.1]{HS98}. However, when intrinsic growth rates are constant, one does not need to increase the dimension to obtain a replicator equation. Indeed, the relative frequencies of types in the Lotka-Volterra system, up to a time change, solve the replicator equation with the same interactions. In particular, setting $A_i(t):=X_i(t)/Z(t)$ and $B_j(t):=Y_j(t)/Z(t)$ for all $i,j\in[k]$, we also prove in Theorem \ref{ltrep} that
\begin{equation}\label{repin}
\prt{A_i}=-A_i(t)\left[\sum_{j=1}^k \pi_{ij} B_j(t)-\bar C(t)\right],\quad \prt{B_j}=-B_j(t)\left[\sum_{i=1}^k \pi_{ij} A_i(t)-\bar C(t)\right],\quad
\end{equation}
where
\begin{equation*}
\bar C(t):=\sum_{i=1}^k\sum_{j=1}^k\pi_{ij}A_i(t)B_j(t).
\end{equation*}
We use (\ref{defnxyz})-(\ref{repin}) to deduce that
\begin{equation}\label{eqntotmass}
\prt{Z}=-{Z(t)}\sum_{i=1}^k\sum_{j=1}^k A_i(t) B_j(t).
\end{equation}
By (\ref{eqforq}), we observe that
\be\label{qode}
\prt{Q_{ij}}=\pi_{ij} Z(t)A_i(t) B_j(t),
\ee
and thus find a three-step procedure for obtaining a formula for $Q(t)$: (i) solve the replicator equations (\ref{repin}) for $A_i$'s and $B_j$'s; (ii) solve (\ref{eqntotmass}) to find the total mass $Z(t)$ of the corresponding (time-changed) Lotka-Volterra equations; and finally (iii) solve (\ref{qode}). 

In Section \ref{sec32}, we focus on the fine balance case. We carry out the three-step procedure and obtain a formula for $Q(t)$ for all $t\in[0,\infty]$, and in particular for the mating pattern $Q(\infty)$. Namely, in Theorem \ref{thmfb} we show that
\begin{equation*}
\begin{aligned}
&A_i(t)=\frac{x_i e^{-\bar\alpha_i t}}{\sum_{i'} x_{i'} e^{-\bar\alpha_{i'} t}},\quad B_{j}(t)=\frac{y_j e^{-\bar\beta_j t}}{\sum_{j'} y_{j'}e^{-\bar\beta_{j'} t}},\\&
Q_{ij}(t)=x_i y_j (1-e^{-\pi_{ij} t}),\quad\text{and}\quad Q_{ij}(\infty)=x_i y_j.
\end{aligned}
\end{equation*}
Here, recall that $\bar\alpha_i$ and $\bar\beta_j$ are from the fine balance condition given in Definition \ref{defnfineb}.
These formulas are fully consistent with those obtained in \cite[Theorem 3.6]{GunYil14a} for the expectations of the pair-type process and the mating pattern in the finite population setting, but here we employ a totally different approach via the replicator equations.

Finally, in Section \ref{sec4} we study the case $k=2$ with $\pi_{12}=\pi_{21}$ and $x_1=y_1$. Due to these symmetries, the evolution of the system can be reduced to that of only, say, females. As a result, the corresponding replicator dynamics is one-dimensional. More precisely, $A_i(t)=B_i(t)$ for all $t\geq 0$ and $i=1,2$, and setting $A_2(t)=1-A_1(t)$, we get
\be\label{foraintro}
\prt{A_1}=- (\pi_{11}+\pi_{22}-2\pi_{12})A_1(t)\big(1-A_1(t)\big)\big(A_1(t)-\gamma\big),
\ee
where 
\be\label{defgam}
\gamma=\frac{\pi_{22}-\pi_{12}}{\pi_{11}+\pi_{22}-2\pi_{12}}.
\ee
Note that in Section \ref{sec32} we explicitly solve the fine balance case which corresponds to $\pi_{11}+\pi_{22}-2\pi_{12}=0$, so we can exclude it, and (\ref{defgam}) is then well-defined. We derive a formula for $Q_{12}(t)$ in terms of $A_1(t)$ which depends on the value of $\gamma$:\\
For $\gamma=1$,
\begin{equation*}
Q_{12}(t)= \frac{\theta_1}{1-x_1}\int_{x_1}^{A_1(t)}\left(\frac{1 - x}{1 - x_1}\right)^{\theta_1 - 1}\left(\frac{x}{x_1}\right)^{-\theta_1-1}\exp\left\{{-}\theta_1\left(\frac1{1-x} - \frac1{1-x_1}\right)\right\}dx;
\end{equation*}
for $\gamma=0$,
\begin{equation*}
Q_{12}(t)= \frac{\theta_2}{x_1}\int_{1-x_1}^{1-A_1(t)}\left(\frac{1 - x}{x_1}\right)^{\theta_2 - 1}\left(\frac{x}{1-x_1}\right)^{-\theta_2-1}\exp\left\{{-}\theta_2\left(\frac1{1-x} - \frac1{x_1}\right)\right\}dx;
\end{equation*}
and for $\gamma\notin\{0,1\}$,
\begin{equation*}
Q_{12}(t) = -\frac{\pi_{12}(x_1 - \gamma)^{-1}}{\pi_{11} + \pi_{22} - 2\pi_{12}} \int_{x_1}^{A_1(t)}\left(\frac{x}{x_1}\right)^{-\theta_1-1}\left(\frac{1 - x}{1 - x_1}\right)^{-\theta_2-1}\left(\frac{x - \gamma}{x_1 - \gamma}\right)^{\theta_1 + \theta_2}dx.
\end{equation*}
Here, $\theta_1=\pi_{12}/(\pi_{22}-\pi_{12})$ and $\theta_2=\pi_{12}/(\pi_{11}-\pi_{12})$. The stability analysis of $A_1$ is then carried out simply using (\ref{foraintro}), and we get an explicit formula for the mating pattern. As an application of this formula, we show in Theorem \ref{homhet} that an infinite population $x_1,x_2,y_1,y_2$ with $x_1 = y_1\in(0,1)$ is homogamous (resp.\ heterogamous) if $\pi_{11}+\pi_{22} > 2\pi_{12}$ (resp.\ $\pi_{11}+\pi_{22} < 2\pi_{12}$), which is consistent with \eqref{ortakavm}, but this time in the infinite population setting and under the symmetry conditions. 

\subsection{Some remarks and open problems}

Several authors previously studied mating models that are similar to the ones in \cite{Gim88a}. See \cite{GunYil14a} for general references regarding pair formation models. One article of particular interest is \cite{Tay75}, where the ODE describing $Q(t)$ was given for two types and studied numerically. 

Panmixia is an important concept in population genetics. It is one of the main assumptions of  the Hardy-Weinberg law which states that genotype
frequencies remain constant in a population to which no evolutionary force acts on, see e.g.\,\cite[Chapter 1]{Ewe04}. In the literature, panmixia is also referred to as ``random mating". However, this term is obviously misleading since mating can be random yet assortative. Moreover, this confusion is even greater for a bottom-up approach such as in SEM, where ``random mating'' suggests that there are no preferences. Indeed, we show in Theorem \ref{thmfb} that there are instances where the mating pattern exhibits zero correlations between female and male types even though there are non-trivial preferences.

In the case of assortative mating, the genotype frequencies might differ greatly from the ones predicted by the Hardy-Weinberg law, see \cite[Chapter 4]{Gil98} and the references therein. Moreover, assortative mating is one of the key concepts of sexual selection, that is, the evolutionary force driven by mating. In the sexual selection literature, most models of pair formation assume that females unilaterally accept or reject males.  Various consequences of female choice have been studied in, e.g., \cite{Kir82,Lan81}. Observe that in the SEM model there is no specification of which sex makes the choice. Actually, this is an advantage of the model: unilateral decisions and choosiness can be incorporated into SEM by appropriately tuning the parameters, while retaining certain degrees of freedom that can be exploited for the purpose of finding exact formulas. However, to enable a self-contained study of sexual selection through SEM we need to extend the model in various directions which we discuss next.

SEM is about permanent pair formation and can be seen as a model of monogamous mating of animals in one mating season. Then, one natural direction in which to extend this model is to change the permanent pair structure. A simple way to do this would be to let the pairs separate with a certain rate and send the individuals that form it back to the singles pool. The life-time of a pair corresponds to ``latency" in the biological context. These kinds of models are important in the study of the evolution of female choice and the mutual evolution of female and male choices (via certain payoff functions for staying together depending on types -- see \cite{Alex1} and \cite{CEFGR}, respectively) and also suitable for studying sexually transmitted diseases (see \cite{DieHad88}). SEM can be generalized also by introducing polygamy, with each male having a limited number of mates (see \cite{Lee08} for such a model in a simpler setting). Finally, adding offspring production might lead to more general Lotka-Volterra systems.

The pair-type process of the Poisson EM model is density dependent, albeit approximately. Fluid and diffusion limits were first established for such processes by Kurtz \cite{K70,K78}. However, to the best of our knowledge, none of the general results in the literature directly cover our model (see Remark \ref{tamdegilya} for details). It is for this reason that we provide a self-contained proof of the fluid limit (Theorem \ref{thmLLN}). One can similarly try to establish a functional central limit theorem (CLT) for the pair-type process and then a CLT for the mating pattern which would complement the LLN (Theorem \ref{cormat}). This is one of our ongoing projects.

In Section \ref{sec4}, we follow the three-step procedure outlined in \eqref{repin}--\eqref{qode} and obtain a formula for $Q(t)$ in the symmetric $2\times 2$ case where the replicator equation constituting the first step is one-dimensional. One can attempt to follow the same procedure in (i) the general $2\times 2$ case and (ii) the symmetric $3\times 3$ case. Phase portraits of all Lotka-Volterra equations on the plane, hence of all two-dimensional replicator equations with constant intrinsic growth rates, are given in \cite{Bom}, which suggests that it might be possible to get an exact formula for the mating pattern in these two cases, too. However, much less is known about Lotka-Volterra equations in higher dimensions. In particular, numerical simulations show that the behavior in higher dimensions is chaotic and the type of chaos they exhibit is not understood at all. See \cite{Glp} for an example of chaos in three dimensions.

\section{Fluid limit and LLN}\label{sec2}

\subsection{Fluid limit of the pair-type process}\label{subsecFL}
The state space of the rescaled pair-type process $n^{-1}Q\n(t)$ is 
$$\mathcal{E}_n:=\left\{M\in\mathcal{M}^{k\times k}(n^{-1}\N\cup\{0\}):\; M_{i,\cdot}\leq n^{-1}x_i^{(n)},M_{\cdot,j}\leq n^{-1}y_j^{(n)},\forall i,j\in[k]\right\}.$$
Define $F^{(n)}=\left(F_{ij}^{(n)}\right)_{i,j\in[k]}:\mathcal{E}_n\to\mathcal{M}^{k\times k}([0,\infty))$
by
$$F_{ij}^{(n)}(M):=\left\{\begin{array}{ll}\frac{\pi_{ij}\big(n^{-1}x_i^{(n)}-M_{i,\cdot}\big)\big(n^{-1}y_j^{(n)}-M_{\cdot,j}\big)}{1-M_{tot}}& \text{if}\ M_{tot} < 1,\\ 0& \text{if}\ M_{tot} = 1.\end{array}\right.$$
We can rewrite the transition rates of $Q\n$, given in (\ref{trates}), as
\begin{equation*}
\rho^{(n)}(M,M+I^{ij})=nF_{ij}^{(n)}(n^{-1}M).
\end{equation*}
Consequently, we have the following representation (see \cite[Section 6.4]{EK}):
$$Q_{ij}\n(t)=J_{ij}\Big(n\int_0^t F_{ij}\n(n^{-1}Q\n(s)) ds\Big).$$
Here, $\big\{J_{ij}:\;i,j\in[k]\big\}$ is a collection of independent standard Poisson processes defined on a common probability space $(\Omega,\mathcal{F},\PP)$. Therefore, $n^{-1}Q\n$ is defined for all $n\in\N$ on the same probability space, too.

The following theorem establishes the fluid limit of the pair-type process, where the limiting (deterministic) process takes values in 
$$\mathcal{E}:=\left\{M\in\mathcal{M}^{k\times k}([0,\infty)):\; M_{i,\cdot}\leq x_i,M_{\cdot,j}\leq y_j,\forall i,j\in[k]\right\}$$
and satisfies a system of ODEs involving
$F=(F_{ij})_{i,j\in[k]}:\mathcal{M}^{k\times k}([0,\infty))\to\mathcal{M}^{k\times k}([0,\infty))$
which is defined by
\be\label{fij}
F_{ij}(M):=\left\{\begin{array}{ll}\frac{\pi_{ij}\big(x_i-M_{i,\cdot}\big)\big(y_j-M_{\cdot,j}\big)}{1-M_{tot}}& \text{if}\ M_{tot} \not= 1,\\ 0& \text{if}\ M_{tot} = 1.\end{array}\right.
\ee

\begin{theorem}\label{thmLLN}
	There exists a function $Q=(Q_{ij})_{i,j\in[k]}:[0,\infty)\to \mathcal{E}$ satisfying 
	\be\label{ode}
	Q(t)=\int_0^t F(Q(s))ds,
	\ee
	and for any $T\in[0,\infty)$, $\mathbb{P}$-a.s. 
	$$\lim_{n\to\infty} \sup_{0\leq t\leq T}\left|n^{-1}Q\n(t)-Q(t)\right|=0.$$
\end{theorem}

Since $F\n$ and $F$ are close (in an appropriate sense which is made precise below) when $n$ is large, $Q\n$ is approximately a density dependent population process (see \cite[Chapter 11]{EK}). Fluid limits were first obtained for such processes by Kurtz in \cite{K70} (showing convergence in probability) and then in \cite{K78} (showing almost sure convergence). The proof of Theorem \ref{thmLLN} is adapted from the latter work, but it involves some modifications (see Remark \ref{tamdegilya}). Before presenting the proof, we give two lemmas.


\begin{lemma}\label{lemferdi}
Let $n\in\mathbb{N}$ and $i,j,i',j'\in [k]$.
\begin{itemize}
\item [(a)] For every $M\in\mathcal{E}_n$ and $M'\in\mathcal{E}$,
$$0 \le F_{ij}\n(M)\leq n^{-1}\pi_{ij}\left(x_i\n\wedge y_j\n\right) \le \pi_{ij}\quad\text{and}\quad 0 \le F_{ij}(M') \leq \pi_{ij}\left(x_i\wedge y_j\right) \le \pi_{ij}.$$
\item [(b)] For every $M\in\mathcal{M}^{k\times k}([0,\infty))$ with $M_{tot} < 1$,
$$\frac{\partial F_{ij}(M)}{\partial M_{i'j'}} = \pi_{ij}\left[\left(\frac{x_i - M_{i,\cdot}}{1-M_{tot}}\right)\left(\frac{y_j - M_{\cdot,j}}{1-M_{tot}}\right) - \left(\frac{x_i - M_{i,\cdot}}{1-M_{tot}}\right)\delta_{jj'} - \left(\frac{y_j - M_{\cdot,j}}{1-M_{tot}}\right)\delta_{ii'}\right],$$
where $\delta_{ij}$ denotes the Kronecker delta function. In particular, 
\begin{equation}\label{inpabir}
\left|\frac{\partial F_{ij}(M)}{\partial M_{i'j'}}\right| \le \begin{cases}\quad\ \pi_{ij} &\ \text{if $M_{tot} < 1$ and $M\in\mathcal{E}$,}\\ \frac{3\pi_{ij}}{(1-M_{tot})^2}&\ \text{if $M_{tot} < 1$ and $M\notin\mathcal{E}$}.\end{cases}
\end{equation}
\item [(c)] For every $M\in\mathcal{E}_n$ with $M_{tot} < 1$,
\begin{align*}F_{ij}^{(n)}(M) - F_{ij}(M) = \frac{\pi_{ij}}{1-M_{tot}}&\left[\left(n^{-1}x_i\n - x_i\right)\left(n^{-1}y_j\n - M_{\cdot,j}\right)\right.\\
&\qquad\ \left.+ \left(n^{-1}y_j\n - y_j\right)\left(x_i - M_{i,\cdot}\right)\right].
\end{align*}
In particular,
\begin{equation}\label{inpaiki}
\left|F_{ij}^{(n)}(M) - F_{ij}(M)\right| \le
\begin{cases}
\pi_{ij}\left[\left|n^{-1}x_i\n - x_i\right| + \left|n^{-1}y_j\n - y_j\right|\right]&\ \text{if $M\in\mathcal{E}_n\cap\mathcal{E}$},\\
\frac{\pi_{ij}}{1-M_{tot}}\left[\left|n^{-1}x_i\n - x_i\right| + \left|n^{-1}y_j\n - y_j\right|\right]&\ \text{if $M\in\mathcal{E}_n\setminus\mathcal{E}$}.
\end{cases}
\end{equation}
\end{itemize}
\end{lemma}

\begin{proof}
	Verification of these simple equalities and bounds is left to the reader. 
\end{proof}

\begin{lemma}\label{lemsonMC}
For every $T\in[0,\infty)$ and $c > \overline\pi:=\max_{i,j\in[k]}\pi_{ij}$, 
$$\mathbb{P}\left(1- n^{-1}Q_{tot}\n(T) \ge e^{-cT}\ \text{for sufficiently large $n$}\right) = 1.$$
\end{lemma}

\begin{proof}
Assume without loss of generality that $p_{ij} = \frac{\pi_{ij}}{\overline{\pi}}$, $\alpha_i \equiv 0$ and $\beta_j \equiv \overline{\pi}$. In particular, only males fire. Fix $T\in[0,\infty)$ and let $R\n(T)$ be the number of males who have never fired by time $T$. Since $n- Q_{tot}\n(T)$ is the number of males who are single by time $T$,
\be\label{uczkurt}
n- Q_{tot}\n(T) \ge R\n(T).
\ee
Enumerate the males and let
$$\xi_m(T) = \begin{cases}1&\ \text{if the $m$th male has never fired by time $T$},\\0&\ \text{else}.\end{cases}$$
Then, $(\xi_m(T))_{m\in[n]}$ are independent Bernoulli trials with $P(\xi_m(T) = 1) = e^{-\overline{\pi}T}$. Fix $c > \overline{\pi}$.
Since $R\n(T) = \sum_{m=1}^n \xi_m(T)$, a standard application of the exponential Chebyshev inequality shows that $\mathbb{P}(n^{-1}R\n(T) < e^{-cT})\to 0$ exponentially as $n\to\infty$. By the Borel-Cantelli lemma,
$$\mathbb{P}\left(n^{-1}R\n(T) \ge e^{-cT}\ \text{for sufficiently large $n$}\right) = 1.$$
In combination with \eqref{uczkurt}, this implies the desired result.
\end{proof}

\begin{proof}[Proof of Theorem \ref{thmLLN}]
Since $F$ is bounded and Lipschitz continuous on $\mathcal{E}$ by Lemma \ref{lemferdi}(a,b), the system of ODEs in (\ref{ode}) has a unique solution $Q$. Let us show that this solution exists for all times. By our assumptions in the Introduction, $\pi_{ij} = p_{ij}(\alpha_i + \beta_j) > 0$ for all $i,j\in[k]$. Thus, $\underline{\pi}:=\min_{i,j\in[k]}\pi_{ij}>0$. Recalling $\overline\pi:=\max_{i,j\in[k]}\pi_{ij}$ and (\ref{fij}), we get
\begin{equation*}
\underline{\pi}(1-Q_{tot}(t))\leq \prt{Q_{tot}}\leq \overline{\pi}(1-Q_{tot}(t)).
\end{equation*}
Since $Q_{tot}(0)=0$, this implies
\be\label{lips}
1-e^{-\underline{\pi}t}\leq Q_{tot}(t)\leq 1-e^{-\overline{\pi}t}.
\ee
Thus, $Q_{tot}(t)<1$ for any $t\in[0,\infty)$, and in particular, $Q$ exists for all times.

The difference between the rescaled pair-type process and its prospective limit $Q$ can be controlled as follows. For every $i,j\in[k]$ and $t\in[0,T]$,
\begin{align}
\left|n^{-1}Q_{ij}\n(t) - Q_{ij}(t)\right| &= \left|n^{-1}J_{ij}\Big(n\int_0^t F_{ij}\n(n^{-1}Q\n(s)) ds\Big) - \int_0^t F_{ij}(Q(s))ds\right|\nonumber\\
&\le \left|n^{-1}J_{ij}\Big(n\int_0^t F_{ij}\n(n^{-1}Q\n(s)) ds\Big) - \int_0^t F_{ij}\n(n^{-1}Q\n(s))ds\right|\label{term1}\\
&\qquad + \left|\int_0^t F_{ij}\n(n^{-1}Q\n(s))ds - \int_0^t F_{ij}(n^{-1}Q\n(s))ds\right|\label{term2}\\
&\qquad + \left|\int_0^t F_{ij}(n^{-1}Q\n(s))ds - \int_0^t F_{ij}(Q(s))ds\right|.\label{term3}
\end{align}
It follows from Lemma \ref{lemferdi}(a) that the term in \eqref{term1} is bounded from above by
$$a_{ij}\n(t) := \sup_{0\le u\le \pi_{ij}t}\left|n^{-1}J_{ij}(nu) - u\right| \le a_{ij}\n(T).$$
Fix $c > \overline{\pi}$. Lemma \ref{lemferdi}(c) and Lemma \ref{lemsonMC} imply that, on some $\Omega_{T}\in\mathcal{F}$ with $\mathbb{P}(\Omega_T) = 1$, the term in \eqref{term2} is bounded from above by
$$b_{ij}\n(t) := \pi_{ij}te^{cT}\left[\left|n^{-1}x_i\n - x_i\right| + \left|n^{-1}y_j\n - y_j\right|\right] \le b_{ij}\n(T)$$
for sufficiently large $n$. Similarly, using Lemma \ref{lemferdi}(b), Lemma \ref{lemsonMC} and \eqref{lips}, on the set $\Omega_T$, the term in \eqref{term3} is bounded from above by
$$3k^2\pi_{ij}e^{2cT}\int_0^t\left|n^{-1}Q\n(s) - Q(s)\right|ds$$
for sufficiently large $n$. (Recall from the Introduction that $|M|=\max_{i,j\in[k]}|M_{ij}|$.) Therefore,
$$\left|n^{-1}Q\n(t) - Q(t)\right| \le \max_{i,j\in[k]}\left(a_{ij}\n(T) + b_{ij}\n(T)\right) +  3k^2\overline\pi e^{2cT}\int_0^t\left|n^{-1}Q\n(s) - Q(s)\right|ds.$$
Since $Q\n(0) = Q(0) = 0$,
$$\left|n^{-1}Q\n(t) - Q(t)\right| \le \max_{i,j\in[k]}\left(a_{ij}\n(T) + b_{ij}\n(T)\right)\exp\left(3k^2\overline\pi te^{2cT}\right)$$
on $\Omega_T$ by Gronwall's inequality. For every $i,j\in[k]$, a standard application of Doob's martingale inequality (followed by the Borel-Cantelli lemma) shows that $$\mathbb{P}\left(\lim_{n\to\infty}a_{ij}\n(T) = 0\right) = 1.$$
Since $\lim_{n\to\infty}b_{ij}\n(T) = 0$ by our assumption in \eqref{convxy}, we are done.
\end{proof}

\begin{remark}\label{tamdegilya}
The existence of the fluid limit is stated and proved in \cite[Theorem 2.2]{K78} for a wide class of density dependent population processes. In order for this class to contain $Q\n$, two conditions would have to be satisfied:
\begin{itemize}
	\item [(i)] $F$ is Lipschitz continuous on $\mathcal{E}_n\cup\mathcal{E}$; and
	\item [(ii)] $|F\n(M) - F(M)| = O(n^{-1})$ uniformly for $M\in\mathcal{E}_n$.
\end{itemize}
However, as we have seen in \eqref{inpabir} and \eqref{inpaiki}, these conditions fail to hold in general since $\mathcal{E}_n$ need not be a subset of $\mathcal{E}$. In the proof of Theorem \ref{thmLLN}, we resolved these issues with the help of Lemma \ref{lemsonMC} (which allowed us to restrict the analysis to the region where $M_{tot}$ is bounded away from $1$) and the observation that the error in the second condition can be relaxed to $o(1)$.
\end{remark}

\subsection{LLN for the mating pattern}

We first describe the state space of the rescaled mating pattern $n^{-1}Q\n(T_n)$ and its asymptotic counterpart. Define
\begin{align*}
\mathcal{E}'_n &:=\left\{M\in\mathcal{M}^{k\times k}(n^{-1}\N\cup\{0\}):\; M_{i,\cdot} = n^{-1}x_i^{(n)},M_{\cdot,j} = n^{-1}y_j^{(n)},\forall i,j\in[k]\right\}\ \text{and}\\
\mathcal{E}' &:=\left\{M\in\mathcal{M}^{k\times k}([0,\infty)):\ M_{i,\cdot}= x_i,M_{\cdot,j}= y_j,\forall i,j\in[k]\right\}.
\end{align*}
By its definition, at time $T_n$ there are no singles left and thus, $n^{-1}Q\n(T_n)\in \mathcal{E}'_n\subset \mathcal{E}_n$. Also note that, for $M\in\mathcal{E}$, $F(M)=0$ if and only if $M\in\mathcal{E}'$. As a result, using (\ref{lips}), we can conclude that $\lim_{t\to \infty}Q(t)=:Q(\infty)$ exists and $Q(\infty)\in \mathcal{E}'\subset\mathcal{E}$.

The following result extends the fluid limit of the pair-type process (Theorem \ref{thmLLN}) to a LLN for the mating pattern.

\begin{theorem}\label{cormat}
$\mathbb{P}$-a.s.
\begin{equation*}
\lim_{n\to\infty}n^{-1}Q\n(T_n) = Q(\infty).
\end{equation*}
\end{theorem}

\begin{proof}

We define
\begin{equation*}
T^\delta:=\inf\big\{t\geq 0:\; Q_{tot}(t)\geq 1-\delta\big\},\quad \delta>0.
\end{equation*}
By (\ref{lips}), we have $T^\delta<\infty$, and $T^\delta\to \infty$ as $\delta\to 0$. Also, it is clear that 
$$\left|Q(T^\delta)-Q(\infty)\right|\leq \delta.$$
Now we define the corresponding stopping time for the Markov process $Q\n$ by
\begin{equation*}
T_n^\delta:=\inf\{t\geq 0:\; n^{-1}Q_{tot}\n(t)\geq 1-\delta\},\quad \delta>0.
\end{equation*}
Then, since obviously $T_n\geq T_n^\delta$ for any $\delta>0$ and $n\geq 1$, we have
$$\left|n^{-1}Q\n(T_n) - n^{-1}Q\n(T_n^\delta)\right|\leq \delta.$$
The triangle inequality gives
\begin{align*}
&\Big|n^{-1}Q\n(T_n)-Q(\infty)\Big|\\ &\quad\leq \Big| n^{-1}Q\n(T_n) - n^{-1}Q\n(T_n^\delta)\Big|+\Big|n^{-1}Q\n(T_n^\delta) - Q(T^\delta)\Big| + \Big|Q(T^\delta)-Q(\infty)\Big|\\
&\quad\le \Big|n^{-1}Q\n(T_n^\delta) - Q(T^\delta)\Big| + 2\delta.
\end{align*}
Therefore, the desired result will follow once we prove that $\PP$-a.s.
\be\label{arauc}
\lim_{n\to\infty}n^{-1}Q\n(T_n^\delta) = Q(T^\delta).
\ee

Fix $\delta>0$. For any $\epsilon<\delta$ we have again $T^{\delta-\epsilon}<\infty$. Thus, via Theorem \ref{thmLLN}, $\PP$-a.s., for all sufficiently large $n$,
\begin{equation*}
n^{-1}Q\n_{tot}(T^{\delta-\epsilon})\geq Q_{tot}(T^{\delta-\epsilon})-\epsilon/2= 1-\delta+\epsilon/2>1-\delta.
\end{equation*}
Hence, $\PP$-a.s., $\limsup _{n\to \infty}T_n^\delta\leq T^{\delta-\epsilon}$. 
Now we use Theorem \ref{thmLLN} on the time interval $[0,T^\delta]$. $\PP$-a.s., for all sufficiently large $n$, and for $t\leq T^\delta$ with $n^{-1}Q_{tot}\n(t)\geq 1-\delta$,
\begin{equation*}
Q_{tot}(t)\geq n^{-1}Q_{tot}\n(t) - \epsilon/2\geq 1-\delta-\epsilon/2>1-\delta-\epsilon.
\end{equation*}
Thus, $t\geq T^{\delta+\epsilon}$ for any such $t$. Also, for any $t>T^\delta$, since $T^\delta\geq T^{\delta+\epsilon}$, we have $t>T^{\delta+\epsilon}$. Hence, $\PP$-a.s., for all sufficiently large $n$, and $t\geq 0$ with $n^{-1}Q_{tot}\n(t)\geq 1-\delta$, we have $t\geq T^{\delta+\epsilon}$, that is, $\liminf_{n\to\infty} T_n^\delta\geq T^{\delta+\epsilon}$. Since $Q_{tot}$ is continuous and increasing, as $\epsilon\to 0$, both $T^{\delta+\epsilon}\to T^{\delta} $ and $T^{\delta-\epsilon}\to T^\delta$. Therefore, $\PP$-a.s.
$$\lim_{n\to\infty}T_n^\delta\to T^\delta.$$
Hence, for any $\epsilon'>0$ given, $\PP$-a.s., for all sufficiently large $n$, we have $T^\delta-\epsilon'\leq T^\delta_n\leq T^\delta +\epsilon'$. Since $Q_{ij}\n(t)$ is non-decreasing in $t$ for any $i,j\in[k]$,
\begin{equation*}
n^{-1}Q\n_{ij}(T^\delta-\epsilon') - Q_{ij}(T^\delta)\leq n^{-1}Q\n_{ij}(T_n^\delta) - Q_{ij}(T^\delta)\leq n^{-1}Q\n_{ij}(T^\delta+\epsilon') - Q_{ij}(T^\delta).
\end{equation*} 
Via the inequalities
\begin{equation*}
\Big|n^{-1}Q\n_{ij}(T^\delta-\epsilon') - Q_{ij}(T^\delta)\Big|\leq \Big|n^{-1}Q\n_{ij}(T^\delta-\epsilon') - Q_{ij}(T^\delta-\epsilon')\Big|+\Big|{Q_{ij}(T^\delta-\epsilon')}-Q_{ij}(T^\delta)\Big|
\end{equation*}
and
\begin{equation*}
\Big|n^{-1}Q\n_{ij}(T^\delta+\epsilon') - Q_{ij}(T^\delta)\Big|\leq \Big|n^{-1}Q\n_{ij}(T^\delta+\epsilon') - Q_{ij}(T^\delta+\epsilon')\Big|+\Big|{Q_{ij}(T^\delta+\epsilon')}-Q_{ij}(T^\delta)\Big|,
\end{equation*}
using once again Theorem \ref{thmLLN} and the continuity of $Q$, \eqref{arauc} follows and we are done.
\end{proof}

\section{Analysis of the fluid limit}\label{sec3}

\subsection{Lotka-Volterra and replicator equations}
Recall from Section \ref{subsecoverview} that 
\begin{equation*}
X_i(t)=x_i-Q_{i,\cdot}(t),\quad Y_j(t)=y_j-Q_{\cdot,j}(t),\quad Z(t)=1-Q_{tot}(t)
\end{equation*}
denote the density of type-$i$ single females, type-$j$ single males, and all single females (or males), respectively. We have also introduced
\begin{equation*}
A_i(t)=\frac{X_i(t)}{Z(t)}\quad \mbox{and}\quad B_{j}(t)=\frac{Y_j(t)}{Z(t)}.
\end{equation*}
In words, $A_i$ is the fraction of type-$i$ females among all single females, and $B_j$ is the fraction of type-$j$ males among all single males. Then, for any $t\geq 0$,
\begin{equation*}
A_1(t)+\cdots+A_k(t)=B_1(t)+\cdots +B_k(t)=1.
\end{equation*}
To state our next result, we define a $2k\times 2k$ matrix 
\begin{equation*}
\hat \Pi := \left(\begin{array}{ll} 0 & \Pi\\ \Pi^T & 0\end{array}\right)
\end{equation*}
as well as vector-valued functions
$$U(t):=\big(X_1(t),\dots,X_k(t),Y_1(t),\dots,Y_k(t)\big)\ \ \text{and}\ \ C(t):=\frac{1}{2}\big(A_1(t),\dots,A_k(t),B_{1}(t),\dots,B_k(t)\big).$$

\begin{theorem}\label{ltrep}
\begin{itemize}
	\item [(a)] $U$ satisfies
	\be\label{lt}
	{\prt{U_i}}=-\frac{1}{Z(t)}U_i(t)(\hat \Pi U(t))_i,\quad i\in[2k],\ t\in(0,\infty),
	\ee
	that is, up to a time change, $U$ is the solution of a system of Lotka-Volterra equations. 
	\item [(b)] $C$ satisfies the following system of replicator equations:
	\be\label{rep}
	\prt{C_i}=-2C_i(t)\big[ (\hat \Pi C(t))_i- C^T(t)\hat \Pi C(t)\big],\quad i\in[2k],\ t\in(0,\infty).
	\ee
\end{itemize}
\end{theorem}

\begin{remark}
When the matrix $\Pi$ is symmetric, which means that its entries do not depend on the sexes but only on the types, and if $x_i=y_i$ for all $i\in[k]$, it is clear that $X_i(t)= Y_i(t)$  and $A_i(t)=B_i(t)$, for all $i\in[k]$ and $t\geq 0$. Consequently, the $2k$ replicator equations in (\ref{rep}) simplify to the following replicator system with $k$ variables: 
\begin{equation}\label{rep2}
\prt{A_i}=-A_i(t)\big[(\Pi A(t))_i-A^T(t)\Pi A(t)\big],\quad i\in[k],\ t\in(0,\infty).
\end{equation}
We use this observation in Section \ref{sec4} while studying the symmetric $2\times 2$ case. A similar simplification also applies to the Lotka-Volterra equations in (\ref{lt}).
\end{remark}

\begin{proof}[Proof of Theorem \ref{ltrep}]
Let us write $X=(X_1,\dots,X_k)$ and $Y=(Y_1,\dots,Y_k)$. Using (\ref{fij}), (\ref{ode}), and the definitions of $X_i,Y_j$ and $Z,$ we get
\be\label{bangla}
\prt{Q_{ij}}=\frac{\pi_{ij}X_i(t)Y_j(t)}{Z(t)}.
\ee
Thus, for $i\in[k]$
\be\label{xder}
\prt{U_i}=\prt{X_i}=-\sum_{j=1}^k \prt{Q_{ij}}=-\frac{1}{Z(t)}X_i(t)\sum_{j=1}^k\pi_{ij}Y_{j}(t)=-\frac{1}{Z(t)}U_i(t)(\Pi Y(t))_i.
\ee
Similarly, for $j\in[k]$
\be\label{yder}
\prt{U_{k+j}}=\prt{Y_j}=-\frac{1}{Z(t)}U_{k+j}(t)(\Pi^T X(t))_j.
\ee
Hence, noting that $(\hat \Pi U)_i=(\Pi Y)_i$ and $(\hat \Pi U)_{k+j}=(\Pi^T X)_j$ for $i,j\in[k]$ gives (\ref{lt}).

Summing (\ref{xder}) over $i$ (or equivalently (\ref{yder}) over $j$) and using the definitions of $A_i$ and $B_j$, we get
\be\label{forz}
\prt{Z}=-Z(t)\big(A^T(t)\Pi B(t)\big)=-Z(t)\big(B^T(t)\Pi^T A(t)\big).
\ee
As a result, using (\ref{xder}), for $i\in[k]$
\be\label{fora}
\begin{aligned}
2\prt{C_i}=\prt{A_i}&=\prt{X_i}\frac{1}{Z(t)}-\frac{X_i(t)}{Z^2(t)}\prt{Z}
\\&=-A_i(t)\left[(\Pi B(t))_i -A^T(t)\Pi B(t)\right].
\end{aligned}
\ee
Similarly, using (\ref{yder}), for $j\in[k]$
\be\label{forb}
\quad 2\prt{C_{k+j}}=\prt{B_j}=-B_j(t)\left[(\Pi^T A(t))_j -B^T(t)\Pi^T A(t)\right].
\ee
By the definition of $\hat \Pi$ we have
\begin{equation*}
(\hat \Pi C(t))_i=\frac{1}{2}(\Pi B(t))_i,\quad (\hat \Pi C(t))_{k+j}=\frac{1}{2}(\Pi^T A(t))_j,\quad i,j\in[k],
\end{equation*}
and 
\begin{equation*}
C^T(t)\hat \Pi C(t)=\frac{1}{4}A^T(t)\Pi B(t)+\frac{1}{4}B^T(t) \Pi^T A(t)=\frac{1}{2}A^T(t)\Pi B(t)=\frac{1}{2} B^T(t) \Pi^T A(t).
\end{equation*}
Thus, using (\ref{fora}), for $i\in [k]$
\begin{equation*}
\prt{C_i}=\frac{1}{2}\prt{A_i}=-2C_i(t)\big[ (\hat \Pi C(t))_i- C^T(t)\hat \Pi C(t)\big],
\end{equation*}
and using (\ref{forb}), for $j\in[k]$
\begin{equation*}
\prt{C_{k+j}}=\frac{1}{2}\prt{B_j}=-2C_{k+j}(t)\big[ (\hat \Pi C(t))_{k+j}- C^T(t)\hat \Pi C(t)\big].
\end{equation*}
This concludes the proof of (\ref{rep}).
\end{proof}

\subsection{Exact solution under fine balance}\label{sec32}

As we have mentioned in Section \ref{subsecprev}, in \cite{GunYil14a} we proved that the fine balance condition (given in Definition \ref{defnfineb}) characterizes panmixia for the species (in the context of finite populations). The next theorem considers infinite populations and gives explicit formulas for the solution of the system of replicator equations and for the pair-type process under the fine balance condition. 
\begin{theorem}\label{thmfb}
Assume that the fine balance condition (\ref{fb}) is satisfied. Then
\begin{equation*}
A_i(t)=\frac{x_i e^{-\bar\alpha_i t}}{\sum_{i'} x_{i'} e^{-\bar\alpha_{i'} t}},\quad B_j(t)=\frac{y_j e^{-\bar\beta_j t}}{\sum_{j'} y_{j'}e^{-\bar\beta_{j'} t}}
\end{equation*}
and
\begin{equation*}
Q_{ij}(t)=x_i y_j (1-e^{-\pi_{ij}t}).
\end{equation*}
In particular,
\begin{equation*}
Q_{ij}(\infty)=x_i y_j.
\end{equation*}
\end{theorem}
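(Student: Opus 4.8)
The plan is to carry out the three-step procedure described after (\ref{qode}): solve the replicator equations (\ref{repin}) for the frequencies $A_i,B_j$; then determine the total single mass $Z$; and finally integrate (\ref{qode}) for $Q_{ij}$. Since $F$ is Lipschitz on $\mathcal{E}$ (proof of Theorem \ref{thmLLN}), the fluid-limit ODE and all its reformulations have unique solutions for the given initial data, so at each step it suffices to exhibit one solution and invoke uniqueness.

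First I would verify the stated ansatz rather than integrate (\ref{repin}) from scratch. Set $S(t):=\sum_{i'}x_{i'}e^{-\bar\alpha_{i'}t}$ and $A_i(t):=x_ie^{-\bar\alpha_i t}/S(t)$; differentiating gives $\prt{A_i}=A_i(t)\big(-\bar\alpha_i+\sum_{i'}\bar\alpha_{i'}A_{i'}(t)\big)$. The heart of the argument is a single cancellation enabled by the fine balance identity (\ref{fb}): because $\pi_{ij}=\bar\alpha_i+\bar\beta_j$ and $\sum_i A_i=\sum_j B_j=1$,
\begin{equation*}
\sum_j\pi_{ij}B_j(t)-\bar C(t)=\Big(\bar\alpha_i+\sum_j\bar\beta_jB_j(t)\Big)-\Big(\sum_{i'}\bar\alpha_{i'}A_{i'}(t)+\sum_j\bar\beta_jB_j(t)\Big)=\bar\alpha_i-\sum_{i'}\bar\alpha_{i'}A_{i'}(t),
\end{equation*}
so the male-frequency terms drop out and the female fitness decouples. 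Substituting this into (\ref{repin}) reproduces exactly the derivative computed above, and $A_i(0)=x_i$ since $S(0)=\sum_{i'}x_{i'}=1$; the symmetric computation with $T(t):=\sum_{j'}y_{j'}e^{-\bar\beta_{j'}t}$ handles $B_j$.

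Next, with $A_i,B_j$ in hand I would note that $\sum_{i'}\bar\alpha_{i'}A_{i'}(t)=-S'(t)/S(t)$ and likewise $\sum_j\bar\beta_jB_j(t)=-T'(t)/T(t)$, whence $\bar C(t)=-\tfrac{d}{dt}\log\big(S(t)T(t)\big)$. Since $A^T(t)\Pi B(t)=\bar C(t)$, equation (\ref{forz}) reads $\prt{Z}=-Z(t)\bar C(t)=Z(t)\tfrac{d}{dt}\log\big(S(t)T(t)\big)$, i.e.\ $\tfrac{d}{dt}\log Z(t)=\tfrac{d}{dt}\log\big(S(t)T(t)\big)$; integrating from the normalization $Z(0)=S(0)T(0)=1$ gives $Z(t)=S(t)T(t)$.

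The final step is then a one-line integration. By (\ref{qode}),
\begin{equation*}
\prt{Q_{ij}}=\pi_{ij}Z(t)A_i(t)B_j(t)=\pi_{ij}\,x_iy_j\,e^{-(\bar\alpha_i+\bar\beta_j)t}=\pi_{ij}x_iy_j\,e^{-\pi_{ij}t},
\end{equation*}
where the factors $S(t),T(t)$ cancel and fine balance is used once more. Integrating with $Q_{ij}(0)=0$ yields $Q_{ij}(t)=x_iy_j(1-e^{-\pi_{ij}t})$, and letting $t\to\infty$ gives $Q_{ij}(\infty)=x_iy_j$. I do not expect a genuinely hard step here: the whole theorem rests on the observation that under fine balance the replicator fitness $\sum_j\pi_{ij}B_j-\bar C$ collapses to $\bar\alpha_i-\sum_{i'}\bar\alpha_{i'}A_{i'}$, which decouples the two sexes and turns the nonlinear dynamics into pure exponentials. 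The only point requiring care is bookkeeping the cancellation of the $\bar\beta_j$ (resp.\ $\bar\alpha_i$) terms and confirming the normalizations $S(0)=T(0)=Z(0)=1$.
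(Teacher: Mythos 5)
Your proposal is correct and follows essentially the same route as the paper: the same fine-balance cancellation $\sum_j\pi_{ij}B_j(t)-\bar C(t)=\bar\alpha_i-\sum_{i'}\bar\alpha_{i'}A_{i'}(t)$, the same identity $Z(t)=\bar A(t)\bar B(t)$ obtained by matching logarithmic derivatives with $Z(0)=1$, and the same final integration of $\prt{Q_{ij}}=\pi_{ij}x_iy_je^{-\pi_{ij}t}$. The only difference is cosmetic: the paper derives $A_i$ constructively by integrating $\frac{d}{dt}\log\big(A_i(t)/A_1(t)\big)=-(\bar\alpha_i-\bar\alpha_1)$ and then normalizing, whereas you verify the exponential ansatz directly and appeal to uniqueness for the (locally Lipschitz, simplex-preserving) replicator system.
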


\begin{remark}
	The formulas in Theorem \ref{thmfb} can also be obtained from \cite[Theorem 3.6]{GunYil14a} via the fluid limit (Theorem \ref{thmLLN}) and the dominated convergence theorem. However, our method here is completely different and self-contained.
\end{remark}

\begin{proof}[Proof of Theorem \ref{thmfb}]
Using (\ref{fora}), for $i\in[k]$ we get
\be\label{tha}
\begin{aligned}
\frac{d }{d t}\log \big(A_i(t)/A_1(t)\big)&=\prt{\log A_i}-\prt{\log A_1}
\\&=-\left[(\Pi B(t))_i- A^T(t)\Pi B(t)\right]+\left[(\Pi B(t))_1- A^T(t)\Pi B(t)\right]
\\&=-\left[(\Pi B(t))_i-(\Pi B(t))_1\right].
\end{aligned}
\ee
Similarly, by (\ref{forb}), for $j\in[k]$ we have
\be\label{thb}
\frac{d }{d t}\log \big(B_j(t)/B_1(t)\big)=-\left[(\Pi^T A(t))_j-(\Pi^T A(t))_1\right].
\ee
Using (\ref{fb}), for $i\in[k]$ we get
\begin{equation*}
(\Pi B(t))_i=\sum_{j=1}^k \pi_{ij} B_j(t)=\sum_{j=1}^k (\bar\alpha_i+\bar\beta_j) B_j(t)=\bar\alpha_i+\sum_{j=1}^k \bar\beta_{j} B_j(t).
\end{equation*}
Then, (\ref{tha}) yields
\begin{equation*}
\frac{d }{d t}\log \big(A_i(t)/A_1(t)\big)=-(\bar\alpha_i-\bar\alpha_1).
\end{equation*}
Hence,
\begin{equation*}
\frac{A_i(t)}{A_1(t)}=\frac{A_i(0)}{A_1(0)}e^{-(\bar\alpha_i-\bar\alpha_1)t} = \frac{x_ie^{-\bar{\alpha}_it}}{x_1e^{-\bar{\alpha}_1t}}.
\end{equation*}
Finally, since $A_1(t)+\cdots +A_k(t)=1$, we get
\begin{equation*}
A_i(t)=\frac{x_i e^{-\bar\alpha_i t}} {\bar{A}(t)},\quad\text{where}\quad \bar{A}(t)=\sum_{i'=1}^k x_{i'} e^{-\bar\alpha_{i'} t}
\end{equation*}
is the normalization term. Similarly, using \eqref{fb} and (\ref{thb}), we get
\begin{equation*}
B_j(t)=\frac{y_j e^{-\bar\beta_j t}} {\bar{B}(t)},\quad\text{where}\quad \bar{B}(t)=\sum_{j'=1}^k y_{j'} e^{-\bar\beta_{j'} t}.
\end{equation*}

Next, we compute $Z(t)$. Note that we can use (\ref{forz}) to write 
\begin{align*}
\frac{d }{d t}\log Z(t) &= - A^T(t)\Pi B(t) = - \sum_{i=1}^k\sum_{j=1}^k(\bar\alpha_i +\bar \beta_j)A_i(t)B_j(t)\\
&= - \sum_{i=1}^k\sum_{j=1}^k\bar\alpha_iA_i(t)B_j(t) - \sum_{i=1}^k\sum_{j=1}^k\bar\beta_jA_i(t)B_j(t)\\
&= - \sum_{i=1}^k\bar\alpha_iA_i(t) - \sum_{j=1}^k\bar\beta_jB_j(t) = - \sum_{i=1}^k\frac{ x_i\bar\alpha_ie^{ - \bar\alpha_it}}{\bar A(t)} - \sum_{j=1}^k\frac{ y_j\bar\beta_je^{ - \bar\beta_jt}}{\bar B(t)}\\
&= \frac{1}{\bar{A}(t)}\prt{\bar A} + \frac{1}{\bar{B}(t)}\prt{\bar B} = \frac{d}{dt}\log \bar{A}(t) + \frac{d}{dt}\log \bar{B}(t) = \frac{d}{dt}\log \left[\bar{A}(t)\bar{B}(t)\right].
\end{align*}
Since $\bar{A}(0) = \bar{B}(0) = Z(0) = 1$, we deduce that
$$Z(t) = \bar{A}(t)\bar{B}(t) = \sum_{i=1}^k x_ie^{ - \bar\alpha_it}\sum_{j=1}^k y_je^{ - \bar\beta_jt} = \sum_{i=1}^k\sum_{j=1}^k x_i y_je^{ - \pi_{ij}t}.$$

Finally, we compute $Q_{ij}(t)$. We can use (\ref{bangla}) to write
$$\prt{Q_{ij}} = \pi_{ij}Z(t)A_i(t)B_j(t) = \pi_{ij}\bar{A}(t)\bar{B}(t)\frac{ x_ie^{ - \bar\alpha_it}}{\bar{A}(t)}\frac{ y_je^{ - \bar\beta_jt}}{\bar{B}(t)} = \pi_{ij} x_i y_je^{ - \pi_{ij}t}.$$
Since $Q_{ij}(0) = 0$, we conclude that
$$Q_{ij}(t) = x_i y_j\left(1 - e^{ - \pi_{ij}t}\right).\qedhere$$
\end{proof}

\section{The symmetric $2\times 2$ case}\label{sec4}
\newcommand{\ppp}[1]{\dot{{#1}}}
In this section we use the shorthand notation $\dot f$ to denote the time derivative $\prt{f}$ of any function $f$. We assume that $k=2$, $\pi_{12}=\pi_{21}$ and $x_1=y_1$. Setting $A_2=1-A_1$, the replicator equation in (\ref{rep2}) becomes a one-dimensional ODE given by
\be\label{syfora}
\ppp{A}_1=-A_1(1-A_1)\Big[(\pi_{11}+\pi_{22}-2\pi_{12})A_1-(\pi_{22}-\pi_{12})\Big],
\ee
with $A_1(0)=x_1$, and (\ref{forz}) is equivalent to
\newcommand{\dt}{{(\pi_{11}+\pi_{22}-2\pi_{12})}}
\newcommand{\dtt}{{(\pi_{22}-\pi_{12})}}
\be\label{syforz}
\frac{\ppp{Z}}{Z}=-\dt A_1^2+2\dtt A_1-\pi_{22},
\ee
with $Z(0)=1$. We already solved for $Q$ in the previous section under the fine balance condition so we exclude that case here, i.e., we assume that $\pi_{11}+\pi_{22}\not= 2\pi_{12}$. Hence, setting 
\begin{equation*}
\gamma=\frac{\pi_{22}-\pi_{12}}{\pi_{11}+\pi_{22}-2\pi_{12}},
\end{equation*}
the equation in (\ref{syfora}) becomes
\be\label{gama}
\ppp{A}_1=-\dt A_1(1-A_1)(A_1-\gamma).
\ee
Recall that our goal is to find a formula for the mating pattern. 
Since $k=2$, it suffices to find a formula for $Q_{12}(\infty)$ because 
\begin{equation*}
Q_{11}(\infty)=x_1-Q_{12}(\infty),\quad Q_{21}(\infty)=y_1-Q_{11}(\infty),\quad \text{and}\quad Q_{22}(\infty)=x_2-Q_{21}(\infty).
\end{equation*}
For this we use (\ref{bangla}), which can be written in the form
\be\label{gamq}
\ppp{Q}_{12}=\pi_{12}ZA_1(1-A_1).
\ee
We first study the case $\gamma\in\{0,1\}$, that is, $\pi_{11}=\pi_{12}$ or $\pi_{22}=\pi_{12}$.

\subsection{$\boldsymbol{ \gamma\in\{0,1\}}$}
We first investigate the case $\gamma=1$, that is, $\pi_{11}=\pi_{12}$. 

\newcommand{\tp}{(t)}
Note that (\ref{gama}) and (\ref{syforz}) become, respectively,
\be\label{onefora}
\ppp{A}_1=(\pi_{22}-\pi_{12})A_1(1-A_1)^2
\ee
and
\be\label{kong}
\frac{\ppp{Z}}{Z}=-(\pi_{22}-\pi_{12})(1-A_1)^2-\pi_{12}.
\ee
We can use partial fractions to write (\ref{onefora}) as
$$\left(\frac1{A_1} + \frac1{1-A_1} + \frac1{(1 - A_1)^2}\right)\ppp{A}_1 = \pi_{22} - \pi_{12}.$$
Integrating both sides and using the initial condition  $A_1(0) = x_1$, we get
\begin{equation}\label{ookan}
\frac{(1-x_1)A_1(t)}{x_1(1 - A_1(t))}\exp\left\{\frac1{1-A_1(t)} - \frac1{1-x_1}\right\} = e^{(\pi_{22} - \pi_{12})t}.
\end{equation}
This is an implicit formula for $A_1(t)$.

Next, we find a formula for $Z(t)$. We know from (\ref{onefora}) that $$(\pi_{22} - \pi_{12})(1 - A_1)^2 = \frac{\dot{A}_1}{A_1}.$$
Substituting this in (\ref{kong}), we see that
$$\frac{\ppp{Z}}{Z} = - \frac{\ppp{A}_1}{A_1} - \pi_{12}.$$
Integrating both sides and using the initial condition $Z(0) = 1$, we get
\begin{equation}\label{zoziz}
Z(t) = \left(\frac{x_1}{A_1(t)}\right)e^{-\pi_{12}t}.
\end{equation}
We can express $Z(t)$ in terms of $A_1(t)$ only (i.e., without any explicit $t$ dependence.) Indeed, raising both sides of (\ref{ookan}) to power $-\theta_1$ where
$$\theta_1 := \frac{\pi_{12}}{\pi_{22} - \pi_{12}}$$
gives
$$\left(\frac{(1-x_1)A_1(t)}{x_1(1 - A_1(t))}\right)^{-\theta_1}\exp\left\{-\theta_1\left(\frac1{1-A_1(t)} - \frac1{1-x_1}\right)\right\} = e^{-\pi_{12}t}.$$
Plugging this into the right-hand side of (\ref{zoziz}), we get
\begin{equation}\label{beklos}
Z \tp= \left(\frac{1 - A_1\tp}{1 - x_1}\right)^{\theta_1}\left(\frac{A_1\tp}{x_1}\right)^{-\theta_1 - 1}\exp\left\{-\theta_1\left(\frac1{1-A_1\tp} - \frac1{1-x_1}\right)\right\}.
\end{equation}

Finally, we express $Q_{12}(t)$ in terms of $A_1(t)$. We put (\ref{onefora}) in the form $$A_1(1 - A_1) = \frac{\dot{A}_1}{(\pi_{22} - \pi_{12})(1-A_1)}.$$
We can use this and (\ref{beklos}) to write
\begin{align*}
\dot{Q}_{12} &= \pi_{12}ZA_1(1 - A_1) = \frac{\theta_1Z\dot{A}_1}{1 - A_1}\\
&= \frac{\theta_1}{1-x_1}\left(\frac{1 - A_1}{1 - x_1}\right)^{\theta_1 - 1}\left(\frac{A_1}{x_1}\right)^{-\theta_1-1}\exp\left\{{-}\theta_1\left(\frac1{1-A_1} - \frac1{1-x_1}\right)\right\}\dot{A}_1.
\end{align*}
Integrating both sides, using the initial conditions $A_1(0) = x_1$ and $Q_{12}(0) = 0$, and making a change of variables, we get
\begin{align*}
Q_{12}(t) & = \frac{\theta_1}{1-x_1}\int_{x_1}^{A_1(t)}\left(\frac{1 - x}{1 - x_1}\right)^{\theta_1 - 1}\left(\frac{x}{x_1}\right)^{-\theta_1-1}\exp\left\{{-}\theta_1\left(\frac1{1-x} - \frac1{1-x_1}\right)\right\}dx\\
& = x_1\theta_1\int_1^{\zeta(t)}x^{ - (\theta_1 +1)}e^{ - \left(\frac{x_1}{1 - x_1}\right)\theta_1(x - 1)}dx
\end{align*}
where
$$\zeta(t) = \frac{(1-x_1)A_1(t)}{x_1(1 - A_1(t))}.$$

If $\pi_{11} = \pi_{12} < \pi_{22}$, then it is easy to see from the stability analysis of (\ref{onefora}) that $$\lim_{t\to\infty}A_1(t) = 1\qquad\mbox{and, hence,}\qquad\lim_{t\to\infty}\zeta(t) = \infty.$$
Therefore, the mating pattern has the following formula:
\begin{equation}\label{rye}
Q_{12}(\infty)= x_1\theta_1\int_1^{\infty}x^{ - (\theta_1 +1)}e^{ - \left(\frac{x_1}{1 - x_1}\right)\theta_1(x - 1)}dx = \int_0^{\infty}\left(1+\frac{y}{x_1\theta_1}\right)^{ - \theta_1 -1}e^{ - \frac{y}{1 - x_1}}dy.
\end{equation}
Here, observe that $\theta_1>0$. 
Similarly, if $\pi_{11} = \pi_{12} > \pi_{22}$, then $$\lim_{t\to\infty}A_1(t) = 0\qquad\mbox{and, hence,}\qquad\lim_{t\to\infty}\zeta(t) = 0.$$
Therefore, the mating pattern has the following formula:
\begin{equation}\label{rye2} 
\begin{aligned}
	Q_{12}(\infty) &=   - x_1\theta_1 \int_0^1x^{ - (\theta_1 +1)}e^{ - \left(\frac{x_1}{1 - x_1}\right)\theta_1(x - 1)}dx \\&= \int_0^{-x_1\theta_1}\left(1+\frac{y}{x_1\theta_1}\right)^{ - \theta_1 -1}e^{ - \frac{y}{1 - x_1}}dy.
	\end{aligned}
\end{equation}
Here, observe that $\theta_1<0$.

For $\gamma=0$, that is, $\pi_{22}=\pi_{12}$, we relabel type-1 individuals as type-2 and type-2 individuals as type-1 (for each sex).  Hence, we have once again the situation where $\gamma=1$. Also, observe that $Q_{12}(t)=Q_{21}(t)$ since $X_i(t)=Y_i(t)$ for all $t\geq 0$. Hence, we get formulas for $Q_{12}(\infty)$ analogous to the ones in (\ref{rye}) and (\ref{rye2}) by simply swapping $\pi_{11}$ with $\pi_{22}$ and $x_1$ with $1-x_1$ (recall that $x_2=1-x_1$). More precisely, setting
\begin{equation*}
\theta_2 := \frac{\pi_{12}}{\pi_{11}-\pi_{12}},
\end{equation*}
we have
\begin{equation*}
Q_{12}(t)= \frac{\theta_2}{x_1}\int_{1-x_1}^{1-A_1(t)}\left(\frac{1 - x}{x_1}\right)^{\theta_2 - 1}\left(\frac{x}{1-x_1}\right)^{-\theta_2-1}\exp\left\{{-}\theta_2\left(\frac1{1-x} - \frac1{x_1}\right)\right\}dx.
\end{equation*} 
As before, by the stability analysis of $A_1(t)$, we have the following formulas for the mating pattern. If $\pi_{22} = \pi_{12} < \pi_{11}$, then
\begin{equation*}
\begin{aligned}
Q_{12}(\infty)&= (1-x_1)\theta_2\int_1^{\infty}x^{ - (\theta_2 +1)}e^{ - \left(\frac{1-x_1}{x_1}\right)\theta_2(x - 1)}dx 
\\&= \int_0^{\infty}\left(1+\frac{y}{(1-x_1)\theta_2}\right)^{ - \theta_2 -1}e^{ - \frac{y}{x_1}}dy,
\end{aligned}
\end{equation*}
where $\theta_2>0$. If $\pi_{22} = \pi_{12} > \pi_{11}$, then
\begin{equation*}
\begin{aligned}
Q_{12}(\infty) &=   -(1-x_1)\theta_2 \int_0^1x^{ - (\theta_2 +1)}e^{ - \left(\frac{1-x_1}{x_1}\right)\theta_2(x - 1)}dx 
\\&= \int_0^{-(1-x_1)\theta_2}\left(1+\frac{y}{(1-x_1)\theta_2}\right)^{ - \theta_2 -1}e^{ - \frac{y}{x_1}}dy,
\end{aligned}
\end{equation*}
where $\theta_2<0$.

\subsection{$\boldsymbol{\gamma\notin\{0,1\}}$}

$x_1=\gamma$ constitutes a special case and we study it first. 

\subsubsection{ $\boldsymbol{x_1 = \gamma\in(0,1)}$}

By (\ref{gama}) we have $\dot{A}_1 = 0$ and, therefore, $A_1(t) = x_1$. Plugging this in (\ref{syforz}) gives
$$\frac{\dot{Z}}{Z} =  - \pi_{12}x_1 - \pi_{22}(1 - x_1).$$
Using the initial condition $Z(0) = 1$, we get
$$Z(t) = e^{-(\pi_{12}x_1 + \pi_{22}(1-x_1))t}.$$
Finally,
$$\dot{Q}_{12} = \pi_{12}ZA_1(1 - A_1) = \pi_{12}x_1(1 - x_1)e^{-(\pi_{12}x_1 + \pi_{22}(1-x_1))t}$$
is easily solved with initial condition $Q_{12}(0) = 0$ to get
$$Q_{12}(t) = \frac{\pi_{12}x_1(1 - x_1)}{\pi_{12}x_1 + \pi_{22}(1-x_1)}\left(1 - e^{-(\pi_{12}x_1 + \pi_{22}(1-x_1))t}\right).$$
In particular, the mating pattern is given by
$$Q_{12}(\infty) = \frac{\pi_{12}x_1(1 - x_1)}{\pi_{12}x_1 + \pi_{22}(1-x_1)} = x_1(1-x_1)\Big[\frac{\pi_{12}\dt}{\pi_{12}(\pi_{22}-\pi_{12})+\pi_{22}(\pi_{11}-\pi_{12})}\Big].$$
Note that, by the definitions of $\theta_1$ and $\theta_2$, we have
\begin{equation*}
1+\frac{1}{\theta_1+\theta_2}=\frac{\pi_{12}(\pi_{22}-\pi_{12})+\pi_{22}(\pi_{11}-\pi_{12})}{\pi_{12}\dt}.
\end{equation*}
Hence, we can write
\be\label{basikelmi}
Q_{12}(\infty) = \frac{x_1(1-x_1)}{1+\frac1{\theta_1 + \theta_2}}.
\ee

\subsubsection{$\boldsymbol{x_1\neq\gamma}$}

Using partial fractions, (\ref{gama}) can be written as
\begin{equation}\label{neko}
\left( - \frac1{\gamma A_1} + \frac1{\gamma(1 - \gamma)(A_1 - \gamma)} + \frac1{(1-\gamma)(1 - A_1)}\right)\dot{A}_1 = -  (\pi_{11} + \pi_{22} - 2\pi_{12}).
\end{equation}
It is clear from (\ref{gama}) that $A_1(t)$ never crosses $\gamma$. Integrating both sides of (\ref{neko}) and using the initial condition $A_1(0) = x_1$, we get
$$\left(\frac{x_1(A_1(t) - \gamma)}{(x_1 - \gamma)A_1(t)}\right)^{\frac1{\gamma}}\left(\frac{(1-x_1)(A_1(t) - \gamma)}{(x_1 - \gamma)(1-A_1(t))}\right)^{\frac1{1-\gamma}} = e^{-(\pi_{11} + \pi_{22} - 2\pi_{12})t}.$$
Raising both sides to power $\frac{\pi_{12}}{\pi_{11} + \pi_{22} - 2\pi_{12}}$ gives
\begin{equation}\label{sakinol}
\left(\frac{x_1(A_1(t) - \gamma)}{(x_1 - \gamma)A_1(t)}\right)^{\theta_1}\left(\frac{(1-x_1)(A_1(t) - \gamma)}{(x_1 - \gamma)(1-A_1(t))}\right)^{\theta_2} = e^{-\pi_{12}t}.
\end{equation}
This is an implicit formula for $A_1(t)$. 

Next, we find a formula for $Z(t)$. We can rewrite (\ref{syforz}) as
\be\label{gams}
\frac{\dot Z}{Z}=-\dt A_1 (A_1-\gamma)-\pi_{12}A_1-\pi_{22}(1-A_1).
\ee
Note that (\ref{gama}) gives
\begin{align*}
&- (\pi_{11} + \pi_{22} - 2\pi_{12})A_1(A_1 - \gamma) = \frac{\dot{A}_1}{1-A_1},\\
&- A_1 = \frac{\dot{A}_1}{(\pi_{11} + \pi_{22} - 2\pi_{12})(A_1 - \gamma)(1-A_1)},\quad\mbox{and}\\
&- (1-A_1) = \frac{\dot{A}_1}{(\pi_{11} + \pi_{22} - 2\pi_{12})A_1(A_1 - \gamma)}.
\end{align*}
Substituting these into the right-hand side of (\ref{gams}) and using partial fractions, we get
\begin{align*}
\frac{\dot{Z}}{Z} &= \left(\frac1{1-A_1} + \frac{\pi_{12}}{(\pi_{11} + \pi_{22} - 2\pi_{12})(A_1 - \gamma)(1-A_1)} + \frac{\pi_{22}}{(\pi_{11} + \pi_{22} - 2\pi_{12})A_1(A_1 - \gamma)}\right)\dot{A}_1\\
& =\left( -\frac{\theta_1 +1}{A_1} + \frac{\theta_2 + 1}{1-A_1} + \frac{\theta_1 + \theta_2 + 1}{A_1 - \gamma}\right)\dot{A}_1.
\end{align*}
We integrate both sides, use the initial conditions $A_1(0) = x_1$ and $Z(0) = 1$, and (\ref{sakinol}) to deduce that
\begin{align}
Z(t) &= \left(\frac{A_1(t)}{x_1}\right)^{-\theta_1-1}\left(\frac{1 - A_1(t)}{1 - x_1}\right)^{-\theta_2-1}\left(\frac{A_1(t) - \gamma}{x_1 - \gamma}\right)^{\theta_1 + \theta_2 + 1}\label{degerli}\\
&= \left(\frac{x_1(1-x_1)(A_1(t) - \gamma)}{(x_1-\gamma)A_1(t)(1-A_1(t))}\right)\left(\frac{x_1(A_1(t) - \gamma)}{(x_1 - \gamma)A_1(t)}\right)^{\theta_1}\left(\frac{(1-x_1)(A_1(t) - \gamma)}{(x_1 - \gamma)(1-A_1(t))}\right)^{\theta_2}\nonumber\\
&= \left(\frac{x_1(1-x_1)(A_1(t) - \gamma)}{(x_1-\gamma)A_1(t)(1-A_1(t))}\right)e^{-\pi_{12}t}.\label{bgfg}
\end{align}
Here, the right-hand side of (\ref{degerli}) is in terms of $A_1(t)$ only. On the other hand, (\ref{bgfg}) is somewhat simpler.

Finally, we provide a formula for the limiting pair-type process. Note that (\ref{gama}) gives
$$A_1(1-A_1) = -\frac{\dot{A}_1}{(\pi_{11} + \pi_{22} - 2\pi_{12})(A_1 - \gamma)}.$$
Using this and (\ref{degerli}), we get
\begin{align*}
\dot{Q}_{12} &= \pi_{12}ZA_1(1 - A_1)\\
&= - \frac{\pi_{12}(x_1 - \gamma)^{-1}}{\pi_{11} + \pi_{22} - 2\pi_{12}}\left(\frac{A_1}{x_1}\right)^{-\theta_1-1}\left(\frac{1 - A_1}{1 - x_1}\right)^{-\theta_2-1}\left(\frac{A_1 - \gamma}{x_1 - \gamma}\right)^{\theta_1 + \theta_2}\dot A_1.
\end{align*}
Integrating both sides, using the initial conditions $A_1(0) = x_1$ and $Q_{12}(0) = 0$, and making a change of variables, we get
\begin{align*}
Q_{12}(t) &= - \frac{\pi_{12}(x_1 - \gamma)^{-1}}{\pi_{11} + \pi_{22} - 2\pi_{12}}\int_{x_1}^{A_1(t)}\left(\frac{x}{x_1}\right)^{-\theta_1-1}\left(\frac{1 - x}{1 - x_1}\right)^{-\theta_2-1}\left(\frac{x - \gamma}{x_1 - \gamma}\right)^{\theta_1 + \theta_2}dx\\
&=\frac{\pi_{12}}{\pi_{11} + \pi_{22} - 2\pi_{12}} \int_{0}^{\xi(t)}\left(1+\frac{\gamma y}{x_1}\right)^{-\theta_1-1}\left(1+\frac{(1-\gamma)y}{1-x_1}\right)^{-\theta_2 -1}dy
\end{align*}
where
$$\xi(t) = \frac{x_1 - A_1(t)}{A_1(t) - \gamma}.$$
In particular, the mating pattern is given by
\begin{equation}\label{sabahh}
 Q_{12}(\infty) = \frac{\pi_{12}}{\pi_{11} + \pi_{22} - 2\pi_{12}} \int_{0}^{\xi(\infty)}\left(1+\frac{\gamma y}{x_1}\right)^{-\theta_1-1}\left(1+\frac{(1-\gamma)y}{1-x_1}\right)^{-\theta_2 -1}dy.
\end{equation}
The value of $\xi(\infty)$ can be deduced from (\ref{gama}) using stability analysis:
\begin{itemize}

\item [(i)] If $\pi_{11}>\pi_{12}$ and $\pi_{22}>\pi_{12}$, then $0<\gamma<1$, $A_1(\infty) = \gamma$ and $\xi(\infty) = \infty$.
\item [(ii)] If $\pi_{11}<\pi_{12}$ and $\pi_{22}<\pi_{12}$, then $0<\gamma<1$ and there are two subcases.
\begin{itemize}
\item If $x_1<\gamma$, then $A_1(\infty) = 0$ and $\xi(\infty) =  - x_1/\gamma$.
\item If $x_1>\gamma$, then $A_1(\infty) = 1$ and $\xi(\infty) = - (1- x_1)/(1-\gamma)$.
\end{itemize}
\item [(iii)] If $\pi_{11}>\pi_{12}$ and $\pi_{22}<\pi_{12}$, then there are two subcases.
\begin{itemize}
\item If $\pi_{11} + \pi_{22} < 2\pi_{12}$, then $\gamma>1$, $A_1(\infty) = 0$ and $\xi(\infty) =  - x_1/\gamma$.
\item If $\pi_{11} + \pi_{22} > 2\pi_{12}$, then $\gamma<0$, $A_1(\infty) = 0$ and $\xi(\infty) =  - x_1/\gamma$.
\end{itemize}
\item [(iv)] If $\pi_{11}<\pi_{12}$ and $\pi_{22}>\pi_{12}$, then there are two subcases.
\begin{itemize}
\item If $\pi_{11} + \pi_{22} < 2\pi_{12}$, then $\gamma<0$, $A_1(\infty) = 1$ and $\xi(\infty) = -(1- x_1)/(1-\gamma)$.
\item If $\pi_{11} + \pi_{22} > 2\pi_{12}$, then $\gamma>1$, $A_1(\infty) = 1$ and $\xi(\infty) = -(1- x_1)/(1-\gamma)$.
\end{itemize}
\end{itemize}
Hence, we have an explicit formula for the mating pattern in each case. 
\begin{figure}[H]
    \centering
    \includegraphics[trim= 15mm 52mm 00mm 45mm, clip, scale=0.4]{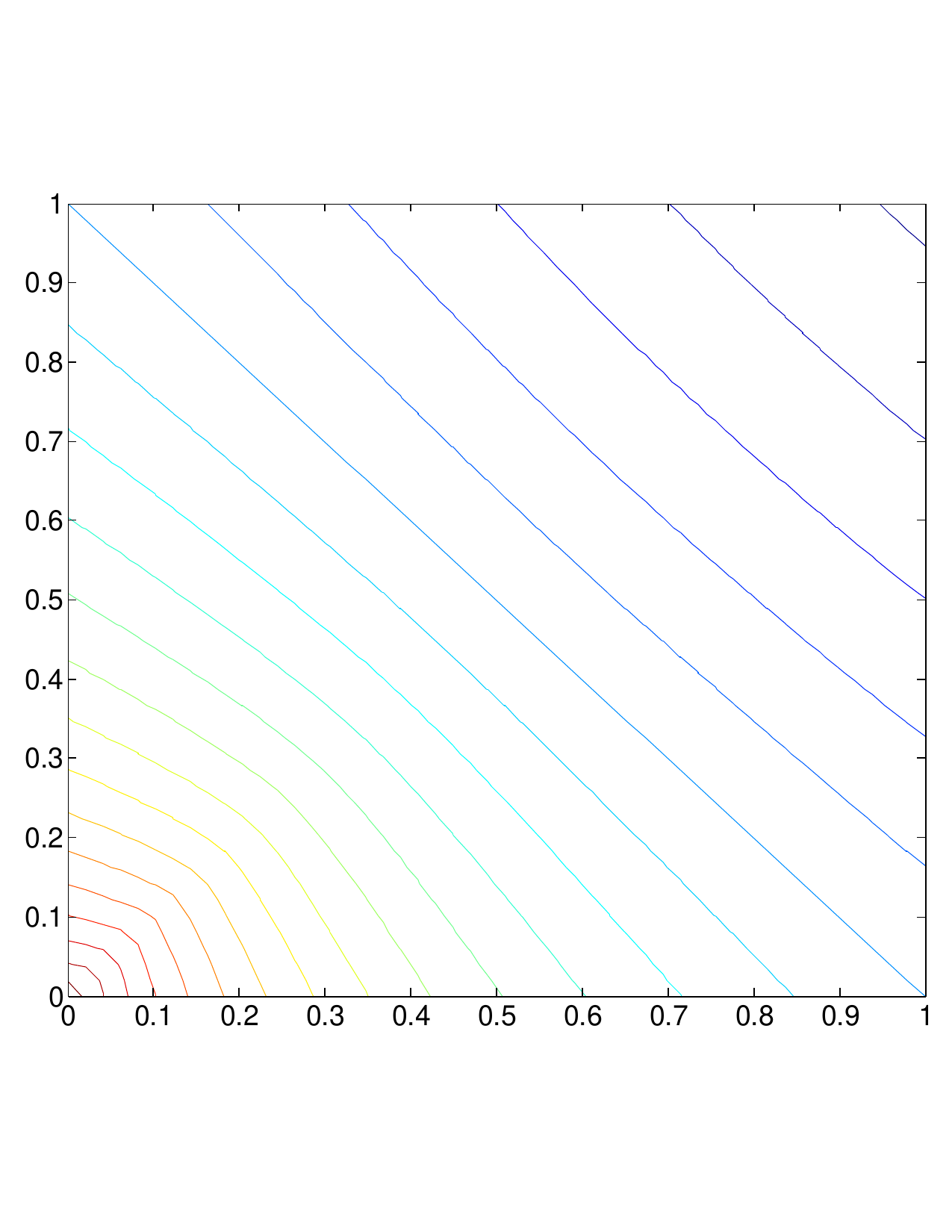}
    \caption{Level curves of $Q_{12}(\infty)$ as a function of $\pi_{11}$ ($x$-axis) and $\pi_{22}$ ($y$-axis) for fixed $\pi_{12}=\pi_{21}=1/2$ and $x_1=x_2=y_1=y_2=1/2$. The value of $Q_{12}(\infty)$ on each level curve is separated by 1/64. The diagonal line $\pi_{11}+\pi_{22}=1$ corresponds to panmixia on which $Q_{12}(\infty) = x_1y_2 = 1/4$.}
    \label{levelcurves}
\end{figure}

\subsection{Characterization of homogamy/panmixia/heterogamy}

Having derived an explicit formula for the mating pattern in the symmetric $2\times 2$ case, we use this formula to provide a trichotomy regarding the mating preferences vs.\ the mating pattern.
\begin{theorem}\label{homhet}
Assume $k=2$, $\pi_{12} = \pi_{21}$ and $x_1 = y_1 \in (0,1)$. Then, the following hold.
\begin{itemize}
\item [(a)] $\textup{(homogamy for symmetric pop.)}\quad \ Q_{12}(\infty) < x_1(1-x_1)\quad\text{if}\quad\pi_{11} + \pi_{22} > 2\pi_{12}$.
\item [(b)] $\textup{(panmixia for symmetric pop.)}\quad\ \ \ Q_{12}(\infty) = x_1(1-x_1)\quad\text{if}\quad\pi_{11} + \pi_{22} = 2\pi_{12}$. 
\item [(c)] $\textup{(heterogamy for symmetric pop.)}\quad Q_{12}(\infty) > x_1(1-x_1)\quad\text{if}\quad\pi_{11} + \pi_{22} < 2\pi_{12}$.
\end{itemize}
\end{theorem}

\begin{remark}
The analog of the trichotomy in Theorem \ref{homhet} for finite populations (without imposing any symmetry conditions) was established in \cite[Theorem 3.9]{GunYil14a} and recorded in \eqref{ortakavm}. In fact, Theorem \ref{homhet} can be almost obtained from \eqref{ortakavm} by applying Theorem \ref{cormat} and the dominated convergence theorem, except that the strict inequalities would not necessarily be preserved. Our main motivation for including Theorem \ref{homhet} here is to provide an application of our formula for the mating pattern.
\end{remark}

\begin{proof}[Proof of Theorem \ref{homhet}(a)]
 
We analyze the formula we derived for $Q_{12}(\infty)$ which depends on $\gamma$.

\underline{$\gamma\in\{0,1\}$:} Consider the case $\gamma=1$, that is, $\pi_{11}=\pi_{12}$. Since we assume that $\pi_{11} + \pi_{22} > 2\pi_{12}$, we have $\pi_{11}=\pi_{12}<\pi_{22}$. Then (\ref{rye}) holds for the mating pattern:
$$Q_{12}(\infty)= \int_0^{\infty}\left(1+\frac{y}{x_1\theta_1}\right)^{ - \theta_1 -1}e^{ - \frac{y}{1 - x_1}}dy,$$
with $\theta_1 > 0$. Note that, since $e^y>(1+y/c)^c > 0$ for every $y>0$ and $c>0$, we have
$$e^{-\frac{y}{1-x_1}} < \left(1+\frac{y}{x_1\theta_1}\right)^{-\frac{x_1}{1-x_1}\theta_1}.$$
Therefore,
$$Q_{12}(\infty) < \int_0^{\infty}\left(1+\frac{y}{x_1\theta_1}\right)^{ - \theta_1 - 1 - \frac{x_1}{1-x_1}\theta_1}dy = x_1(1-x_1).$$
The proof for $\gamma=0$, that is, $\pi_{22}=\pi_{12}$, is exactly the same.

\underline{$\gamma\notin\{0,1\}$:}
In the case where $x_1=\gamma\in(0,1)$, recall from \eqref{basikelmi} that
\begin{equation*}
Q_{12}(\infty)=\frac{x_1(1-x_1)}{1+\frac1{\theta_1 + \theta_2}}.
\end{equation*}
Since $\gamma\in(0,1)$ and $\pi_{11} + \pi_{22} > 2\pi_{12}$, we have $\pi_{11}>\pi_{12}$ and $\pi_{22}>\pi_{12}$. Hence, $\theta_1+\theta_2>0$, which implies that $Q_{12}(\infty)<x_1(1-x_1)$.

Now assume that $x_1\not= \gamma$. We consider first the case $\pi_{11}>\pi_{12}$ and $\pi_{22}>\pi_{12}$. Then, $\gamma\in(0,1)$, $A_1(\infty)=\gamma$, $\xi(\infty)=\infty$, and $\theta_1,\theta_2>0$. By the formula in (\ref{sabahh}) we have
\begin{equation*}
Q_{12}(\infty) = \theta_1\gamma \int_{0}^{\infty}\left(1+\frac{\gamma y}{x_1}\right)^{-(\theta_1+1)}\left(1+\frac{(1-\gamma)y}{1-x_1}\right)^{-(\theta_2 +1)}dy.
\end{equation*}
If $x_1>\gamma$, then
$$0 < \frac{1-x_1}{1-\gamma} < 1 < \frac{x_1}{\gamma}.$$
Thus, since $(1+y/c_1)^{c_1}<(1+y/c_2)^{c_2}$ for every $y>0$ and $0<c_1<c_2$, we get
\begin{equation*}
0 < \left(1+\frac{(1-\gamma)y}{1-x_1}\right)^{\frac{1-x_1}{1-\gamma}} < \left(1+\frac{\gamma y}{x_1}\right)^{\frac{x_1}{\gamma}}.
\end{equation*}
The above inequality gives
$$\left(1+\frac{\gamma y}{x_1}\right)^{-(\theta_1 + 1)} < \left(1+\frac{(1-\gamma)y}{1-x_1}\right)^{-(\theta_1 + 1)\frac{\gamma(1-x_1)}{(1-\gamma)x_1}}.$$
Therefore, 
\begin{align*}
Q_{12}(\infty) &< \theta_1\gamma \int_{0}^{\infty}\left(1+\frac{(1-\gamma)y}{1-x_1}\right)^{-(\theta_1 + 1)\frac{\gamma(1-x_1)}{(1-\gamma)x_1} - (\theta_2 +1)}dy = \frac{x_1(1-x_1)}{1 + (1-x_1)/\theta_1} < x_1(1-x_1).
\end{align*}
Similarly, if $x_1<\gamma$, then we get
$$Q_{12}(\infty) < \frac{x_1(1-x_1)}{1 + x_1/\theta_2} < x_1(1-x_1).$$

Next, consider the case $\pi_{11}>\pi_{12}>\pi_{22}$. Then $\gamma<0$, $A_1(\infty)=0$, $\xi(\infty)=-x_1/\gamma$, $\theta_1<-1$, $\theta_2>0$ and $\theta_1+\theta_2<0$. By (\ref{sabahh}), we have 
\begin{equation*}
Q_{12}(\infty) = \theta_1\gamma \int_0^{-x_1/\gamma}\left(1+\frac{\gamma y}{x_1}\right)^{-(\theta_1+1)}\left(1+\frac{(1-\gamma)y}{1-x_1}\right)^{-(\theta_2 +1)}dy.
\end{equation*}
Since $\gamma<0$,
$$\frac{1-x_1}{1-\gamma} > 0 > \frac{x_1}{\gamma},$$
which implies
$$\left(1+\frac{\gamma y}{x_1}\right)^{\frac{x_1}{\gamma}} >  \left(1+\frac{(1-\gamma)y}{1-x_1}\right)^{\frac{1-x_1}{1-\gamma}} > 0$$ 
for every $y\in (0,-x_1/\gamma)$. Note that $(1-\gamma)\theta_2=\gamma \theta_1$. Hence, raising both sides of the above inequality to power $-\frac{\gamma\theta_1}{\gamma-x_1}=-\frac{(1-\gamma)\theta_2}{\gamma-x_1}>0$, we get
$$\left(1+\frac{\gamma y}{x_1}\right)^{-\frac{x_1\theta_1}{\gamma-x_1}} > \left(1+\frac{(1-\gamma)y}{1-x_1}\right)^{-\frac{(1-x_1)\theta_2}{\gamma-x_1}}.$$
Therefore,
\begin{align*}
Q_{12}(\infty) &< \theta_1\gamma \int_0^{-x_1/\gamma}\left(1+\frac{\gamma y}{x_1}\right)^{-(\theta_1+1) -\frac{x_1\theta_1}{\gamma-x_1}}\left(1+\frac{(1-\gamma)y}{1-x_1}\right)^{-(\theta_2 +1) + \frac{(1-x_1)\theta_2}{\gamma-x_1}}dy\\
&= \theta_1\gamma \int_0^{-x_1/\gamma}\left(\frac{1+\frac{\gamma y}{x_1}}{1+\frac{(1-\gamma)y}{1-x_1}}\right)^{-\frac{\theta_1\gamma}{\gamma - x_1} - 1}\left(1+\frac{(1-\gamma)y}{1-x_1}\right)^{-2}dy\\
&= x_1(1-x_1)\left(\frac{\theta_1\gamma}{x_1-\gamma}\right) \int_0^1u^{\frac{\theta_1\gamma}{x_1 - \gamma} - 1}du\\
&= x_1(1-x_1).
\end{align*}

Finally, the case $\pi_{22}>\pi_{12}>\pi_{11}$ is reduced to the previous case simply by switching the roles of $\pi_{11}$ and $\pi_{22}$ (and of $x_1$ and $1-x_1$).
\end{proof}

\begin{proof}[Proof of Theorem \ref{homhet}(b)]
If $\pi_{11} + \pi_{22} = 2\pi_{12}$, then the fine balance condition is satisfied, and $Q_{12}(\infty)=x_1 y_2 = x_1(1-x_1)$ by Theorem \ref{thmfb} and the assumption that $x_1 = y_1$.
\end{proof}

\begin{proof}[Proof of Theorem \ref{homhet}(c)]
	
We proceed exactly as in the proof of Theorem \ref{homhet}(a). 
	
	\underline{$\gamma\in\{0,1\}$:} Consider the case $\gamma=1$, that is, $\pi_{11}=\pi_{12}$. Since we assume that $\pi_{11} + \pi_{22} < 2\pi_{12}$, we have $\pi_{11}=\pi_{12}>\pi_{22}$. Then (\ref{rye2}) holds for the mating pattern:
	$$Q_{12}(\infty)= \int_0^{-x_1\theta_1}\left(1+\frac{y}{x_1\theta_1}\right)^{ - \theta_1 -1}e^{ - \frac{y}{1 - x_1}}dy,$$
	with $\theta_1 < 0$. Note that, since $e^y<(1+y/c)^c$ for every $y\in(0,-c)$ and $c<0$, we have
	$$e^{-\frac{y}{1-x_1}} > \left(1+\frac{y}{x_1\theta_1}\right)^{-\frac{x_1}{1-x_1}\theta_1} > 0$$
	for every $y\in(0,-x_1\theta_1)$.
	Therefore,
	$$Q_{12}(\infty) > \int_0^{-x_1\theta_1}\left(1+\frac{y}{x_1\theta_1}\right)^{ - \theta_1 - 1 - \frac{x_1}{1-x_1}\theta_1}dy = x_1(1-x_1).$$
	The proof for $\gamma=0$, that is, $\pi_{22}=\pi_{12}$, is exactly the same.
	
	\underline{$\gamma\notin\{0,1\}$:}
	In the case where $x_1=\gamma\in(0,1)$, recall from \eqref{basikelmi} that
	\begin{equation*}
	Q_{12}(\infty)=\frac{x_1(1-x_1)}{1+\frac1{\theta_1 + \theta_2}}.
	\end{equation*}
	Since $\gamma\in(0,1)$ and $\pi_{11} + \pi_{22} < 2\pi_{12}$, we have $\pi_{11}<\pi_{12}$ and $\pi_{22}<\pi_{12}$. Hence, $\theta_1+\theta_2<-2$, which implies that $Q_{12}(\infty)>x_1(1-x_1)$.
	
	Now assume that $x_1\not= \gamma$. We consider first the case $\pi_{11}<\pi_{12}$ and $\pi_{22}<\pi_{12}$. Then, $\gamma\in(0,1)$, and $\theta_1,\theta_2 < -1$. By the formula in (\ref{sabahh}) we have
	\begin{equation*}
	Q_{12}(\infty) = \theta_1\gamma \int_{0}^{\xi(\infty)}\left(1+\frac{\gamma y}{x_1}\right)^{-(\theta_1+1)}\left(1+\frac{(1-\gamma)y}{1-x_1}\right)^{-(\theta_2 +1)}dy.
	\end{equation*}
	If $x_1>\gamma$, then $A_1(\infty)=1$, $\xi(\infty)= -(1-x_1)/(1-\gamma)$ and
	$$0 < \frac{1-x_1}{1-\gamma} < 1 < \frac{x_1}{\gamma}.$$
	Thus, since $(1+y/c_1)^{c_1}<(1+y/c_2)^{c_2}$ for every $y\in(-c_1,0)$ and $0< c_1<c_2$, we get
	\begin{equation*}
	0 < \left(1+\frac{(1-\gamma)y}{1-x_1}\right)^{\frac{1-x_1}{1-\gamma}} < \left(1+\frac{\gamma y}{x_1}\right)^{\frac{x_1}{\gamma}}
	\end{equation*}
	for every $y\in(-(1-x_1)/(1-\gamma),0)$.
	The above inequality gives
	$$\left(1+\frac{\gamma y}{x_1}\right)^{-(\theta_1 + 1)} > \left(1+\frac{(1-\gamma)y}{1-x_1}\right)^{-(\theta_1 + 1)\frac{\gamma(1-x_1)}{(1-\gamma)x_1}}.$$
	Therefore, 
	\begin{align*}
	Q_{12}(\infty) &> -\theta_1\gamma \int_{-(1-x_1)/(1-\gamma)}^0\left(1+\frac{(1-\gamma)y}{1-x_1}\right)^{-(\theta_1 + 1)\frac{\gamma(1-x_1)}{(1-\gamma)x_1} - (\theta_2 +1)}dy\\
	&= \frac{x_1(1-x_1)}{1 + (1-x_1)/\theta_1} > x_1(1-x_1).
	\end{align*}
	Similarly, if $x_1<\gamma$, then we get
	$$Q_{12}(\infty) > \frac{x_1(1-x_1)}{1 + x_1/\theta_2} > x_1(1-x_1).$$
	
	Next, consider the case $\pi_{11}>\pi_{12}>\pi_{22}$. Then $\gamma>1$, $A_1(\infty)=0$, $\xi(\infty)=-x_1/\gamma$, $\theta_1<-1$, $\theta_2>0$ and $\theta_1+\theta_2>0$. By (\ref{sabahh}), we have 
	\begin{equation*}
	Q_{12}(\infty) = -\theta_1\gamma \int_{-x_1/\gamma}^0\left(1+\frac{\gamma y}{x_1}\right)^{-(\theta_1+1)}\left(1+\frac{(1-\gamma)y}{1-x_1}\right)^{-(\theta_2 +1)}dy.
	\end{equation*}
	Since $\gamma>1$,
	$$\frac{1-x_1}{1-\gamma} < 0 < \frac{x_1}{\gamma},$$
	which implies
	$$0 < \left(1+\frac{\gamma y}{x_1}\right)^{\frac{x_1}{\gamma}} < \left(1+\frac{(1-\gamma)y}{1-x_1}\right)^{\frac{1-x_1}{1-\gamma}}$$ 
	for every $y\in (-x_1/\gamma,0)$. Note that $(1-\gamma)\theta_2=\gamma \theta_1$. Hence, raising both sides of the above inequality to power $-\frac{\gamma\theta_1}{\gamma-x_1}=-\frac{(1-\gamma)\theta_2}{\gamma-x_1}>0$, we get
	$$\left(1+\frac{\gamma y}{x_1}\right)^{-\frac{x_1\theta_1}{\gamma-x_1}} < \left(1+\frac{(1-\gamma)y}{1-x_1}\right)^{-\frac{(1-x_1)\theta_2}{\gamma-x_1}}.$$
	Therefore,
	\begin{align*}
	Q_{12}(\infty) &> -\theta_1\gamma \int_{-x_1/\gamma}^0\left(1+\frac{\gamma y}{x_1}\right)^{-(\theta_1+1) -\frac{x_1\theta_1}{\gamma-x_1}}\left(1+\frac{(1-\gamma)y}{1-x_1}\right)^{-(\theta_2 +1) + \frac{(1-x_1)\theta_2}{\gamma-x_1}}dy\\
	&= -\theta_1\gamma \int_{-x_1/\gamma}^0\left(\frac{1+\frac{\gamma y}{x_1}}{1+\frac{(1-\gamma)y}{1-x_1}}\right)^{-\frac{\theta_1\gamma}{\gamma - x_1} - 1}\left(1+\frac{(1-\gamma)y}{1-x_1}\right)^{-2}dy\\
	&= x_1(1-x_1)\left(\frac{\theta_1\gamma}{x_1-\gamma}\right) \int_0^1u^{\frac{\theta_1\gamma}{x_1 - \gamma} - 1}du\\
	&= x_1(1-x_1).
	\end{align*}
	
	Finally, the case $\pi_{22}>\pi_{12}>\pi_{11}$ is reduced to the previous case simply by switching the roles of $\pi_{11}$ and $\pi_{22}$ (and of $x_1$ and $1-x_1$).
\end{proof}

\section*{Acknowledgments}

We thank A.\ Courtiol, R.\ O'Donnell and F.\ Rezakhanlou for valuable discussions. O.\ G\"un gratefully acknowledges support by DFG SPP Priority Programme 1590 ``Probabilistic Structures in Evolution". A.\ Yilmaz is supported in part by European Union FP7 Marie Curie Career Integration Grant no.\ 322078.

\bibliographystyle{plain}
\bibliography{mating_references}

\begin{thebibliography}{10}

\bibitem{Bom}
I.~M. Bomze.
\newblock Lotka-{V}olterra equation and replicator dynamics: A two-dimensional
  classification.
\newblock {\em Biological Cybernetics}, 48(3):201--211, 1983.

\bibitem{CEFGR}
A.~Courtiol, L.~Etienne, R.~Feron, B.~Godelle, and F.~Rousset.
\newblock The evolution of mutual mate choice under direct benefits.
\newblock {\em {American Naturalist}}, {188}({5}):521--538, {2016}.

\bibitem{DieHad88}
K.~Dietz and K.~P. Hadeler.
\newblock Epidemiological models for sexually transmitted diseases.
\newblock {\em J. Math. Biol.}, 26(1):1--25, 1988.

\bibitem{EK}
S.~N. Ethier and T.~G. Kurtz.
\newblock {\em Markov processes: characterization and convergence}.
\newblock Wiley Series in Probability and Mathematical Statistics: Probability
  and Mathematical Statistics. John Wiley \& Sons, Inc., New York, 1986.

\bibitem{Alex1}
L.~Etienne, F.~Rousset, B.~Godelle, and A.~Courtiol.
\newblock How choosy should {I} be? {T}he relative searching time predicts
  evolution of choosiness under direct sexual selection.
\newblock {\em Proceedings of the Royal Society B: Biological Sciences},
  281(1785), 2014.

\bibitem{Ewe04}
W.~J. Ewens.
\newblock {\em Mathematical population genetics {I}. Theoretical introduction},
  volume~27 of {\em Interdisciplinary Applied Mathematics}.
\newblock Springer-Verlag, New York, second edition, 2004.

\bibitem{Gil98}
J.~H. Gillespie.
\newblock {\em Population genetics. A Concise Guide}.
\newblock The Johns Hopkins University Press, 1998.

\bibitem{Glp}
M.~E. Gilpin.
\newblock Spiral chaos in a predator-prey model.
\newblock {\em {American Naturalist}}, {113}({2}):306--308, {1979}.

\bibitem{Gim88a}
A.~Gimelfarb.
\newblock {Processes of pair formation leading to assortative mating in
  biological populations: encounter-mating model}.
\newblock {\em {American Naturalist}}, {131}({6}):865--884, {1988}.

\bibitem{GunYil14a}
O.~{G{\"u}n} and A.~{Yilmaz}.
\newblock The stochastic encounter-mating model.
\newblock {\it Acta Appl.\ Math.}, 2016, doi:10.1007/s10440-016-0079-9.

\bibitem{HS98}
J.~Hofbauer and K.~Sigmund.
\newblock {\em Evolutionary games and population dynamics}.
\newblock Cambridge University Press, Cambridge, 1998.

\bibitem{Kir82}
M.~Kirkpatrick.
\newblock Sexual selection and the evolution of female choice.
\newblock {\em Evolution}, 36(1):1--12, 1982.

\bibitem{K70}
T.~G. Kurtz.
\newblock Solutions of ordinary differential equations as limits of pure jump
  {M}arkov processes.
\newblock {\em J. Appl. Probability}, 7:49--58, 1970.

\bibitem{K78}
T.~G. Kurtz.
\newblock Strong approximation theorems for density dependent {M}arkov chains.
\newblock {\em Stochastic Processes Appl.}, 6(3):223--240, 1977/78.

\bibitem{Lan81}
R.~Lande.
\newblock Models of speciation by sexual selection on polygenic traits.
\newblock {\em Proc. Nat. Acad. Sci. U.S.A.}, 78(6, part 2):3721--3725, 1981.

\bibitem{Lee08}
A.~L. Lee, S.~Engen, and B.~E. Saether.
\newblock Understanding mating systems: A mathematical model of the pair
  formation process.
\newblock {\em Theoretical Population Biology}, 73(1):112--124, 2008.

\bibitem{Tay75}
C.~E. Taylor.
\newblock {Differences in mating propensities: some models for examining the
  genetic consequences}.
\newblock {\em {Behavior Genetics}}, {5}({4}):381--393, {1975}.

\bibitem{TJ78}
P.~D. Taylor and L.~B. Jonker.
\newblock Evolutionary stable strategies and game dynamics.
\newblock {\em Mathematical Biosciences}, 40:145--156, 1978.

\end{thebibliography}

\end{document}